\documentclass[11pt,a4paper]{article}
\usepackage[T1]{fontenc}
\usepackage{mathptmx}
\usepackage{amsmath,amssymb,amsthm,mathtools}
\usepackage[margin=27mm,headheight=14pt]{geometry}
\usepackage{booktabs,array}
\usepackage{graphicx}
\graphicspath{{figures/}{output/pdf/figures/}}
\usepackage{microtype,fancyhdr}
\usepackage{xcolor}
\definecolor{citegreen}{RGB}{0,128,0}
\usepackage[colorlinks=true,citecolor=citegreen,linkcolor=blue,urlcolor=blue]{hyperref}
\usepackage{url}
\numberwithin{equation}{section}
\newtheorem{theorem}{Theorem}
\newtheorem{lemma}{Lemma}[section]
\newtheorem{proposition}[lemma]{Proposition}

\theoremstyle{remark}
\newtheorem{remark}[lemma]{Remark}

\newcommand{\R}{\mathbb R}

\newcommand{\D}{\mathcal D}
\newcommand{\dd}{\,\mathrm d}
\newcommand{\supp}{\operatorname{supp}}
\newcommand{\dist}{\operatorname{dist}}
\newcommand{\loc}{\mathrm{loc}}

\newcommand{\eps}{\varepsilon}
\newcommand{\M}{\mathcal M}

\newcommand{\one}{\mathbf 1}
\title{\bfseries An inverse moving point source problem in inhomogeneous acoustic media: uniqueness, stability and reconstruction}
\author{ Dong Qiu\thanks{\raggedright School of Mathematical Sciences, Zhejiang University, Hangzhou 310058, China. \mbox{Email: \href{mailto:qiudong@zju.edu.cn}{\texttt{qiudong@zju.edu.cn}}}.} \and Xiang Xu\thanks{\raggedright School of Mathematical Sciences, and Center for Interdisciplinary Applied Mathematics, Zhejiang University, Hangzhou 310058, China. \mbox{Email: \href{mailto:xxu@zju.edu.cn}{\texttt{xxu@zju.edu.cn}}}.} \and Yue Zhao\thanks{\raggedright School of Mathematics and Statistics, and Key Lab NAA-MOE, Central China Normal University, Wuhan 430079, China. \mbox{Email: \href{mailto:zhaoy@ccnu.edu.cn}{\texttt{zhaoy@ccnu.edu.cn}}}.} }
\date{}

\begin{document}
\maketitle
\vspace{-1.2em}
\begin{abstract}

This paper is concerned with the simultaneous recovery of the trajectory and intensity of an unknown moving point source in an inhomogeneous acoustic medium. 
We prove that the whole-boundary measurements over a finite time interval uniquely determine the moving source. Furthermore, under suitable propagation and rank assumptions, 
Lipschitz stability of determining both the trajectory and intensity is derived using waveforms measured at four fixed receivers outside the source region. A novel Volterra--Euler reconstruction scheme is developed for solving the inverse problem, and the error estimate is derived for the proposed scheme.
Numerical experiments are presented to demonstrate the effectiveness of the proposed numerical method.
\end{abstract}

\noindent\textit{Keywords:} moving point sources, inhomogeneous medium, uniqueness, stability, reconstruction.
\medskip

\noindent\textit{2020 Mathematics Subject Classification:} Primary~35R30; Secondary~35L05.

\section{Introduction}

The inverse moving point source problem seeks to determine the trajectory and intensity of a source from acoustic measurements. Such problems arise in sound source localization and tracking, with applications to traffic noise identification \cite{Cousson2019}, aircraft localization \cite{Martin2016}, and underwater target tracking \cite{Xerri2002}. In this paper, we study the simultaneous recovery of the trajectory and intensity of a moving point source in an inhomogeneous acoustic medium.

In homogeneous acoustic media, algebraic and real-time reconstruction methods have been developed using boundary measurements~\cite{NIO2012,Ohe2020}. For a prescribed periodic signal, measurements at four receivers uniquely determine the source trajectory~\cite{HKZ2020}. For a known constant intensity, trajectory uniqueness and Lipschitz stability were established from measurements at six suitably chosen boundary points~\cite{AET2022}. \cite{WKT2023} established uniqueness and stability results for the joint recovery of a subsonic source trajectory and its intensity from boundary measurements, and developed a Bayesian reconstruction framework. 

Related developments include multiple-source reconstruction, multifrequency imaging, and electromagnetic moving source problems. Simultaneous reconstruction of multiple acoustic source trajectories and intensities was studied in~\cite{LW2026} under prescribed kinematic information and additional geometric and algebraic conditions. The proposed method combines arrival-time ODEs, linear systems for the intensities, and a direct imaging correction. Multifrequency far-field and near-field methods recover partial geometric information about the trajectory~\cite{GHM2023,MGH2024}, while direct imaging of multiple moving acoustic sources was developed in~\cite{WCL2024}. \cite{HKLZ2019} investigated inverse moving source problems in electrodynamics in homogeneous media. With the temporal source vector prescribed, trajectory reconstruction and Lipschitz stability were subsequently established from tangential magnetic fields at four noncoplanar receivers through nonlinear ODEs for the receiver--source distances~\cite{LHZ2026}.  In all the aforementioned works, the medium is required to be homogeneous and thus the explicit Green function is available. 
For acoustic waves in inhomogeneous media, a low-frequency trajectory reconstruction method was proposed in~\cite{WGL2017}, using a field approximation that reduces the influence of the background inhomogeneity. We also refer the reader to \cite{Borcea, Borcea1} on other interesting topics on inverse problems involving moving 
inhomogeneities.

Consider the acoustic wave equation
\begin{equation}
 P u:=n(x)\partial_t^2u-\Delta u=F(x,t),
 \qquad (x,t)\in\R^3\times\R,
 \label{eq:wave}
\end{equation}
supplemented by the causal condition
\begin{equation}
 u=0\qquad\text{for }t<0.
 \label{eq:causal}
\end{equation}
Throughout this paper, we assume that the background coefficient is a known real function and satisfies
\begin{equation}
 n(x)=1+q(x),\qquad q\in C_c^\infty(\R^3),
 \qquad 0<n_0\leq n(x)\leq n_1.
 \label{eq:medium}
\end{equation} 
We consider a single moving point source,
\begin{equation}
 F(x,t)=\lambda(t)\delta_{a(t)}(x),\qquad
 a:[0,T_0]\longrightarrow\R^3,
 \label{eq:source}
\end{equation}
where the intensity is extended by zero outside $(0,T_0)$.

Let $\Omega\subset\R^3$ be a bounded connected domain with smooth boundary $\Gamma=\partial\Omega$ and set $E=\R^3\setminus\overline\Omega$.
Fix a compact admissible source region $D_0\Subset\Omega$ and assume that $\supp q\Subset\Omega$. For the uniqueness problem, the trajectory--intensity pair satisfies
\begin{equation}
 a\in W^{1,\infty}([0,T_0];\R^3),\qquad
 a([0,T_0])\subset D_0,\qquad
 \lambda\in L^1(0,T_0).
 \label{eq:admissible}
\end{equation}

For the whole-boundary problem, the measured quantity is the pressure on $\Gamma$,
\begin{equation}
 \mathcal O_T(F)=u|_{\Gamma\times(0,T)}.
 \label{eq:data}
\end{equation}
For \eqref{eq:admissible}, the trace is distributional; its construction is given in Appendix~\ref{sec:direct}.

We ask whether these data determine the intensity and trajectory. Motivated by practical measurement constraints, we also investigate stability and reconstruction from waveforms at four fixed receivers outside the admissible source region. Compared with their homogeneous counterparts, inverse moving source problems in inhomogeneous media may generate a scattered tail that couples receiver measurements to the source history, posing additional challenges for the joint recovery of the trajectory and intensity.

In this paper, we prove finite-time uniqueness from whole-boundary measurements by temporal regularization and continuation outside the trajectory traces.
Furthermore, under suitable propagation and rank assumptions, we establish conditional joint Lipschitz stability of recovering the trajectory and intensity from waveforms at
only four fixed receivers outside the admissible source region. These assumptions can be realized by considering small perturbations of the homogeneous background medium. We also develop a novel Volterra--Euler reconstruction scheme and derive an error estimate to solve the inverse moving source problem.
To deal with general inhomogeneous media, we employ the spectral reconstruction method. Numerical experiments with noisy four-receiver data demonstrate the effectiveness of the proposed reconstruction methods.

Sections~\ref{sec:distribution} and~\ref{sec:four-stability} establish whole-boundary uniqueness and conditional stability from four fixed receivers, respectively. Section~\ref{sec:algorithms} develops the two reconstruction methods and demonstrates their effectiveness through four-receiver numerical experiments with time-domain and frequency-domain data. Appendices~\ref{sec:direct}, \ref{app:Green}, and~\ref{app:marching} provide the well-posedness analysis, the Green function representation, and the discrete error analysis, respectively.

\section{Inverse moving source problem: whole boundary measurements}\label{sec:distribution}
In this section, we study the unique recovery of the trajectory and intensity of a moving point source from whole-boundary pressure measurements. We work with the medium and source class specified in \eqref{eq:medium} and \eqref{eq:admissible}. The causal field and its distributional trace are constructed in Appendix~\ref{sec:direct}. To recover the entire trajectory, we additionally impose the activity condition
\begin{equation}
 \int_I |\lambda(t)|\dd t>0
 \quad\text{for every nonempty open interval }I\subset(0,T_0).
 \label{eq:activity}
\end{equation}
This condition is satisfied, in particular, if $\lambda(t)\ne0$ for almost every $t\in(0,T_0)$.

The optical metric and the travel time along a piecewise $C^1$ path $\gamma:[0,1]\to\R^3$ are
\begin{equation}
 g=n(x)\,\dd x^2,
 \qquad
 \ell_g(\gamma)=\int_0^1\sqrt{n(\gamma(s))}\,|\gamma'(s)|\dd s.
 \label{eq:metric}
\end{equation}
The minimum travel-time from $x$ to $y$ is $d_g(x,y)=\inf_{\gamma:x\to y}\ell_g(\gamma)$. Set
\begin{equation}
 r(x)=d_g(x,E),\qquad R_0=\max_{x\in D_0}r(x).
 \label{eq:radius}
\end{equation}
For $x\in\Omega$, $r(x)=d_g(x,\Gamma)$. The metric is uniformly equivalent to the Euclidean metric, so $R_0$ is finite. 

\begin{theorem}\label{thm:main}
 Let $(a_j,\lambda_j)$, $j=1,2$, satisfy \eqref{eq:admissible} and the activity condition \eqref{eq:activity}. Let $u_j$ be the causal solutions of \eqref{eq:wave} with sources $F_j=\lambda_j(t)\delta_{a_j(t)}$. Suppose that
\begin{equation}
 T\geq T_0+R_0,
 \qquad
 u_1=u_2\quad\text{on }\Gamma\times(0,T).
 \label{eq:main-data}
\end{equation}
Then
\begin{equation}
 \lambda_1(t)=\lambda_2(t), \quad a_1(t)=a_2(t)
 \quad\text{for almost every }t\in(0,T_0).
 \label{eq:atomic-equality}
\end{equation}
\end{theorem}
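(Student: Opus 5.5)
We plan to argue with the difference $w=u_1-u_2$, which is the causal solution of $Pw=F_1-F_2$. The aim is to show that the distribution $F_1-F_2$ vanishes on $\R^3\times(0,T_0)$. This gives the conclusion directly: the distributions $\lambda_1(t)\delta_{a_1(t)}$ and $\lambda_2(t)\delta_{a_2(t)}$ coincide, and pairing with $\phi(x)\chi(t)$ shows that $\lambda_1(t)\phi(a_1(t))=\lambda_2(t)\phi(a_2(t))$ for a.e. $t$ and every $\phi$ in a countable dense family. Taking $\phi\equiv1$ near $D_0$ gives $\lambda_1=\lambda_2$. Wherever $\lambda_1(t)\neq0$, the point evaluations then force $a_1(t)=a_2(t)$. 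The activity condition \eqref{eq:activity} makes the set $\{\lambda_1\neq0\}$ dense in $(0,T_0)$, and continuity of the Lipschitz trajectories then yields $a_1=a_2$ everywhere.

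\textbf{Step 1 (exterior vanishing).} Both $u_j$ solve the homogeneous wave equation in $E\times\R$ (where $n=1$) with zero data for $t<0$. If $w=0$ on $\Gamma\times(0,T)$, the exterior Dirichlet problem has the unique causal solution $0$, so $w=0$ in $E\times(0,T)$. To make this rigorous for distributional traces, we first regularize in time: convolve with a mollifier $\rho_\eps(t)$ supported in $(0,\eps)$. This preserves causality and the equation, makes $w_\eps$ smooth in $t$, and gives vanishing on $\Gamma\times(0,T-\eps)$. Energy uniqueness then applies to $w_\eps$.

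\textbf{Step 2 (unique continuation inward).} We have $w_\eps=0$ in $E\times(0,T-\eps)$, and $Pw_\eps=0$ in $\Omega\setminus K_\eps$, where $K_\eps$ is the $\eps$-thickened union of the space-time trajectory traces $\{(a_j(t),t)\}$. The equation has time-independent coefficients, so Tataru's unique continuation theorem applies. It propagates the zero from the exterior cylinder into the region $\{(x,t):x\in\Omega,\ d_g(x,E)<\min(t,T-\eps-t)\}$, and more generally into the connected region of the complement of the source support reachable by optical distance. The delicate part is to do this outside the traces, since the continuation set must avoid the sources.

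We therefore proceed by a sweep in time. Suppose $F_1=F_2$ on $(0,t_0)$, so that $w=0$ for $t<t_0$ by causality. Near time $t_0$ the sources sit at the points $a_j(t_0)\in D_0$. Let $s=t_0+\delta$. The double-cone domain of dependence with apex $(x,s)$ and time half-width $\le T-s$ reaches $\Gamma$ provided $r(x)\le T-s$. Since $T-t_0\ge R_0\ge r(x)$ (using $T\ge T_0+R_0$), for small $\delta$ Tataru continuation gives $w=0$ at all points of $D_0$ whose cone avoids the traces. This leaves only a small neighborhood of the traces on $(t_0,t_0+\delta)$.

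\textbf{Step 3 (support argument).} On the strip $(t_0,t_0+\delta)$, $w$ is then supported in a thin neighborhood of the traces. Since $Pw=F_1-F_2$ is a sum of two point sources, and the fundamental solution of $P$ has the singular part $\frac{1}{4\pi}\delta(t-d_g)$ times an amplitude, a source that is not matched produces a nonzero field on the light cone. That field leaves any small neighborhood, contradicting the vanishing established in Step 2. Concretely, we pair $w$ with test functions built from adjoint solutions (backward Green functions centered near $a_1(t)$) to read off $\lambda_1(t)\phi(a_1(t))-\lambda_2(t)\phi(a_2(t))=0$. This extends the equality $F_1=F_2$ to $(0,t_0+\delta)$. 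A continuity/connectedness argument then finishes the sweep to $T_0$.

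I expect the main obstacle to be Step 2/3: applying unique continuation across regions that touch the moving traces at low regularity ($\lambda\in L^1$, distributional traces), and handling the fact that the two trajectories may cross. The mollification in time and the activity condition are the tools intended to control these difficulties.
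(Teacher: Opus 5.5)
Your reduction of the conclusion to $F_1=F_2$ and your Step~1 (time mollification plus exterior energy uniqueness) are sound and agree with the paper. The gap is in Steps~2--3.

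The continuation result you want, in the global form \eqref{eq:tataru}, requires $Pw=0$ on a cylinder $M\times(-S,S)$ over a \emph{fixed} spatial domain $M$. The complement of a thickened space-time trace is not of this form, and ``the cone with apex $(x,s)$ avoids the traces'' is not a criterion that any such theorem provides. To bring boundary information back to $(x,s)$ you need a time window of half-width $S>r(x)$. On that window the continuation region contains, at intermediate times $\tau$, all points at optical distance less than $S-|\tau-s|$ from the exterior. This typically includes the current source positions $a_j(\tau)$, where $w$ need not vanish.

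So the sweep does not localize the difficulty to a short strip. The data relevant to the source near $t_0$ reach $\Gamma$ around $t_0+r(a_j(t_0))$, and bringing them back requires continuation over a window during which the sources keep moving. Step~3 is then circular. Detecting an unmatched source through its light-cone field, or by pairing $w$ with backward Green functions, requires $w=0$ at later times away from the sources, which is exactly what Step~2 was supposed to supply. Moreover, with $\lambda\in L^1$ there is no pointwise singular front to follow. Excising a thickened neighborhood also leaves a set of positive measure on which $w$ is unknown.

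The missing idea is to remove the entire \emph{spatial} trace $K=a_1([0,T_0])\cup a_2([0,T_0])$ for all times. Since $F_\eps$ is still supported in $K\times\R$, you get $Pw_\eps=0$ on the genuine cylinder $(\R^3\setminus K)\times\R$. Lemma~\ref{lem:distance} shows that deleting finitely many Lipschitz curves from $\R^3$ keeps the complement connected and leaves the optical distance unchanged, $d_{g,\R^3\setminus K}(x,E)=r(x)$. The one-sided continuation Lemma~\ref{lem:one-sided} uses $w_\eps=0$ on $E\times(-\infty,T-\eps)$, including negative times, so no symmetric window is needed. It gives $w_\eps(x,t)=0$ for $x\notin K$ and $t+r(x)<T-\eps$.

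The time mollification keeps $w_\eps\in C^\infty(\R;L^2(\R^3))$ by Lemma~\ref{lem:smoothing}, and $K\times\R$ is a null set. Hence $w_\eps$ vanishes as a distribution on $\mathcal V_{T-\eps}$, so $F_\eps=Pw_\eps=0$ there. Letting $\eps\to0$ gives $F_1=F_2$ on $\mathcal V_T$, which contains both source supports when $T\geq T_0+R_0$. No time sweep and no analysis of the singular part of the Green function are needed. Crossing trajectories need no special treatment, since only the union $K$ enters.
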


The proof uses three lemmas on temporal regularization, geometry of trajectory traces and one-sided unique continuation, which we state and prove below.

Choose $\rho\in C_c^\infty((-1,1))$ with $\int\rho=1$, and set
\begin{equation}
 \rho_\eps(t)=\eps^{-1}\rho(t/\eps),\qquad
 u_\eps=\rho_\eps*_t u,\qquad
 F_\eps=\rho_\eps*_t F.
 \label{eq:mollification}
\end{equation}

\begin{lemma}\label{lem:smoothing}
Let $u$ be the solution in Proposition~\ref{prop:direct}, and assume that the spatial support of $F$ is contained in a compact set $K$. For every $\eps>0$,
\begin{equation}
 Pu_\eps=F_\eps,\qquad
 \supp F_\eps\subset K\times\R,
 \label{eq:mollified-equation}
\end{equation}
and
\begin{equation}
 u_\eps\in C^\infty(\R;L^2(\R^3)),\qquad
 u_\eps\in C^\infty((\R^3\setminus K)\times\R).
 \label{eq:mollified-regularity}
\end{equation}
Moreover, $u_\eps=0$ for $t<-\eps$.
\end{lemma}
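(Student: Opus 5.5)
The plan is to use only that $P$ has time-independent coefficients, so it commutes with convolution in $t$, together with the regularity and causality of $u$ from Proposition~\ref{prop:direct}. I take that proposition to give the causal solution as a distribution in $\R^3\times\R$. It should be tempered or of finite order in $t$, vanish for $t<0$, satisfy $Pu=F$, and belong to some space like $H^{-m}_{\loc}(\R;L^2(\R^3))$ or $\mathcal D'(\R;L^2(\R^3))$ with support in $t\ge 0$.

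\emph{Step 1 (equation and support).} Since $n$ depends only on $x$, for any test function $\phi\in C_c^\infty(\R^3\times\R)$ the adjoint $P^*=n\partial_t^2-\Delta$ commutes with the time reflection $\check\rho_\eps *_t$. Hence $\langle Pu_\eps,\phi\rangle=\langle u,P^*(\check\rho_\eps*_t\phi)\rangle=\langle F,\check\rho_\eps*_t\phi\rangle=\langle F_\eps,\phi\rangle$. Convolution in $t$ does not enlarge the spatial support, so $\supp F_\eps\subset K\times\R$. Moreover, $\supp u\subset\R^3\times[0,\infty)$ and $\supp\rho_\eps\subset(-\eps,\eps)$ give $\supp u_\eps\subset\R^3\times[-\eps,\infty)$, which yields $u_\eps=0$ for $t<-\eps$.

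\emph{Step 2 (smoothness in time).} If $u\in\mathcal D'(\R;L^2(\R^3))$, then $u_\eps(t)=\langle u,\rho_\eps(t-\cdot)\rangle$ is a $C^\infty$ function of $t$ with values in $L^2(\R^3)$. Its derivatives are given by $\partial_t^k u_\eps=(\partial_t^k\rho_\eps)*_t u$. This is the standard fact that convolution of a vector-valued distribution with a test function is smooth.

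\emph{Step 3 (smoothness away from $K$).} On the open set $U=(\R^3\setminus K)\times\R$ we have $F_\eps=0$, so $n\,\partial_t^2u_\eps=\Delta u_\eps$ there. All time derivatives $\partial_t^k u_\eps$ lie in $C(\R;L^2)$ and solve the same homogeneous equation in $U$. Thus $\Delta\partial_t^k u_\eps=n\,\partial_t^{k+2}u_\eps\in C(\R;L^2_{\loc})$, and elliptic interior regularity in $x$ bootstraps $\partial_t^k u_\eps\in C(\R;H^{s}_{\loc}(\R^3\setminus K))$ for every $s$. Here $n$ is smooth, and we iterate $\Delta^m\partial_t^k u_\eps=\partial_t^k(n^{-1}\ldots)$, using $\Delta(n\,\cdot)$ expansions with smooth coefficients. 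Sobolev embedding in $x$ combined with smoothness in $t$ then gives $u_\eps\in C^\infty(U)$. Equivalently, one can note that $P$ is hyperbolic in $x$-directions, but the elliptic bootstrap is cleaner.

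The main obstacle is Step 3. Joint smoothness needs care: one must show that mixed derivatives $\partial_t^k\partial_x^\alpha u_\eps$ are continuous. I would first prove by induction on $m$ that $\partial_t^k u_\eps\in C(\R;H^{2m}(V))$ for every $V\Subset\R^3\setminus K$ and all $k$. Each step uses local elliptic estimates on nested domains, applied uniformly in $t$ so that continuity in $t$ is preserved. The joint smoothness then follows from $H^{2m}\hookrightarrow C^{2m-2}$.
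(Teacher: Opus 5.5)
Your Steps 1 and 3 are fine and follow the paper. Step 2, however, rests on a hypothesis you do not have. Proposition~\ref{prop:direct} only gives \eqref{eq:negative-energy}, namely $u\in C(\R;H^{-1})$ and $\partial_t u\in C(\R;H^{-2})$. It does not say that $u$ is an $L^2(\R^3)$-valued distribution in time.

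Mollification in $t$ does not improve spatial regularity. By itself it yields only $u_\eps\in C^\infty(\R;H^{-1})$ and $F_\eps\in C^\infty(\R;H^{-2})$. The spatial gain is not automatic either: for $\lambda\in L^1$ the field is in general not $L^2$ in space at fixed times. For a stationary source in the homogeneous medium, $u=\lambda(t-|x-y|)/(4\pi|x-y|)$ has $\|u(t)\|_{L^2}^2=(4\pi)^{-1}\int_0^{t}|\lambda(s)|^2\dd s$. This is infinite for $t\geq T_0$ whenever $\lambda\notin L^2(0,T_0)$. So the property you assume, that $u$ takes values in $L^2$ in a time-distributional sense, is exactly what the first assertion of \eqref{eq:mollified-regularity} requires you to prove. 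Your Step 3 inherits the gap, since it starts the bootstrap from $\partial_t^k u_\eps\in C(\R;L^2)$.

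The missing idea is to use the equation globally, trading the two time derivatives (free after mollification) for two spatial derivatives:
\[
(1-\Delta)u_\eps=F_\eps+u_\eps-n\partial_t^2u_\eps .
\]
The right-hand side lies in $C^\infty(\R;H^{-2})$. This uses $F\in L^1_{\loc}(\R;H^{-2})$, which is where \eqref{eq:measure-sobolev} enters, and the boundedness of multiplication by $n$ on $H^{-1}$. Since $(1-\Delta)^{-1}:H^{-2}\to L^2$ is bounded and time independent, you get $u_\eps\in C^\infty(\R;L^2(\R^3))$.

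This step must be global in space, including on $K$ where $F_\eps\neq0$. A purely local elliptic bootstrap away from $K$ could start from $H^{-1}_{\loc}$ and still give the second assertion of \eqref{eq:mollified-regularity}, but not the first. With this step inserted, the rest of your argument coincides with the paper's.
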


\begin{proof}
Since all coefficients of $P$ are independent of time and the convolution acts only in time, \eqref{eq:mollified-equation} follows. From \eqref{eq:negative-energy}, differentiation of the smooth convolution kernel gives $u_\eps\in C^\infty(\R;H^{-1})$, $F_\eps\in C^\infty(\R;H^{-2})$. The identity
\begin{equation}
 (1-\Delta)u_\eps
 =F_\eps+u_\eps-n\partial_t^2u_\eps
 \label{eq:bessel}
\end{equation}
has a right-hand side in $C^\infty(\R;H^{-2})$. Since $(1-\Delta)^{-1}:H^{-2}\to L^2$ is bounded, it proves the first assertion in \eqref{eq:mollified-regularity}.

On an open set disjoint from $K$, the source $F_\eps$ is zero. For each integer $j\geq0$,
\begin{equation}
 \Delta\partial_t^j u_\eps
 =n\partial_t^{j+2}u_\eps.
 \label{eq:elliptic-bootstrap}
\end{equation}
Interior elliptic regularity~\cite[Chapters~4 and 5]{Taylor2011}, applied on nested relatively compact spatial sets, improves the spatial regularity of all time derivatives at once. Starting with $L^2$, it gives $H^2_{\loc}$, then $H^4_{\loc}$, and so on. The estimates also give continuity and differentiability in time with values in each local Sobolev space. Sobolev embedding proves joint smoothness in $(x,t)$. The last assertion follows from $u=0$ for $t<0$ and $\supp\rho_\eps\subset[-\eps,\eps]$.
\end{proof}

For an open set $D\subset\R^3$, define the infima of travel times over all paths and over paths contained in $D$, respectively, by
\[
 d_g(x,y)=\inf_{\gamma:x\to y}\ell_g(\gamma),\qquad
 d_{g,D}(x,y)=\inf_{\substack{\gamma:x\to y\\\gamma\subset D}}
                         \ell_g(\gamma).
\]
The following lemma shows that removing finitely many Lipschitz curves in three dimensions does not change the infimum of travel times between points outside the curves.

\begin{lemma}\label{lem:distance}
Let $K$ be the union of finitely many compact Lipschitz curve images in $\R^3$, and let $g$ be a smooth positive definite metric. Then $\R^3\setminus K$ is path connected, $K$ has zero three-dimensional Lebesgue measure, and
\begin{equation}
 d_{g,\R^3\setminus K}(x,y)=d_g(x,y)
 \qquad(x,y\notin K).
 \label{eq:distance-equality}
\end{equation}
In particular, if $K\Subset\Omega$ and $x\notin K$, then
\begin{equation}
 d_{g,\R^3\setminus K}(x,E)=r(x).
 \label{eq:distance-exterior}
\end{equation}
\end{lemma}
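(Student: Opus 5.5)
The plan is to treat the three assertions in turn. All of them rest on one basic fact: a compact Lipschitz curve image has Hausdorff dimension at most one.

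\emph{Zero measure.} Let $K=\bigcup_{i=1}^m c_i([0,1])$ with each $c_i$ Lipschitz with constant $L_i$. Split $[0,1]$ into $N$ subintervals of length $1/N$. Then $c_i([0,1])$ is covered by $N$ balls of radius $L_i/N$, so its outer measure is at most $C N\cdot N^{-3}\to0$. Hence $|K|=0$. The same covering gives $\mathcal H^1(K)<\infty$, and therefore $\mathcal H^2(K)=0$, which I will use below.

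\emph{Distance equality.} The inequality $d_{g,\R^3\setminus K}\ge d_g$ is trivial, so the work is the reverse one. Fix $x,y\notin K$ and $\eta>0$. Choose a piecewise $C^1$ path $\gamma$ from $x$ to $y$ with $\ell_g(\gamma)<d_g(x,y)+\eta$, and replace it by a polygonal path $x=p_0,p_1,\dots,p_M=y$ of nearly the same length. This is possible because $g$ is smooth and uniformly equivalent to the Euclidean metric on a compact neighbourhood of $\gamma$, so short Euclidean segments approximate the path's $g$-length well. The remaining task is to perturb the polygon off $K$.

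\emph{Perturbing the polygon.} Keep the endpoints fixed. Choose radii $\delta>0$ small enough that $B(x,\delta)$ and $B(y,\delta)$ miss $K$, and move each interior vertex $p_k$ to some $p_k+v_k$ with $|v_k|<\delta$. The key claim is that for almost every choice of the $v_k$, each segment $[p_{k-1}+v_{k-1},\,p_k+v_k]$ avoids $K$. I would prove this one segment at a time. Take a segment $[p,q]$ with $p\notin K$ and move only its other endpoint $q$ through a small ball $B(q,\delta)$. The set of endpoints $q'$ for which $[p,q']$ meets $K$ is contained in the cone $\{p+s(z-p): z\in K,\ s\ge1\}$ intersected with a bounded region. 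This cone is the image of $K\times[1,S]$ under a map that is Lipschitz away from $p$. Since $\dim_H(K\times[1,S])\le2$, the image has three-dimensional measure zero.

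\emph{Handling points near $p$.} The radial-projection map is only Lipschitz away from $p$, so I treat $K$ in two pieces. Because $\dist(p,K)>0$, on all of $K$ we have $|z-p|\ge\dist(p,K)$, and the map is Lipschitz there. Here the fixed endpoint $p$ is either the true endpoint $x$, or a vertex already placed off $K$ by the inductive step. Choosing the vertices sequentially, $p_1+v_1$ first, then $p_2+v_2$, and so on, gives at each step a full-measure set of admissible choices. The final segment to the fixed $y$ is handled by requiring that $p_{M-1}+v_{M-1}$ avoid the cone from $y$ as well; that is one more null condition. A small perturbation changes the $g$-length by at most $\eta$ once $\delta$ is small, which gives the reverse inequality $d_{g,\R^3\setminus K}(x,y)\le d_g(x,y)+2\eta$. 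The same construction shows that $\R^3\setminus K$ is path connected.

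I expect this measure-zero-cone step to be the main obstacle, because it is the only place where the dimension count ($1+1<3$) really enters. The proof must therefore state carefully why the Lipschitz image of a set of Hausdorff dimension at most two is Lebesgue-null.

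\emph{Distance to $E$.} For \eqref{eq:distance-exterior}, take $x\notin K$ with $K\Subset\Omega$. For any $y\in E$ we have $y\notin K$, so $d_{g,\R^3\setminus K}(x,y)=d_g(x,y)$. Take the infimum over $y\in E$. Near-minimizing paths to $E$ may be taken to end at points of $E$, since $E$ is open and disjoint from $K$. This gives $d_{g,\R^3\setminus K}(x,E)=d_g(x,E)=r(x)$.
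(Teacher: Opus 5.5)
Your proposal is correct and is essentially the paper's argument: approximate a near-minimizing path by a polygon, and use the fact that the shadow cone $\{p+s(z-p): z\in K,\ s\ge1\}$ of a point $p\notin K$ is Lebesgue-null. The only difference is bookkeeping: you perturb the existing vertices sequentially, whereas the paper first moves the vertices off $K$ and then inserts a new vertex near each segment's midpoint outside $C_p\cup C_q$. The nullity step is cleanest as the paper does it, by noting that $(\theta,s)\mapsto p+s(c_i(\theta)-p)$ is Lipschitz on $[0,1]\times[1,S]$; this removes your worry about Lipschitz continuity near $p$ ($\dist(p,K)>0$ is needed only to cap $s\le S$) and any appeal to the Hausdorff dimension of products.
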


\begin{proof}
Set $D=\R^3\setminus K$. Each Lipschitz curve has zero three-dimensional measure, so $K$ does too. Restricting paths gives $d_{g,D}\geq d_g$, and it remains to construct paths in $D$ with lengths arbitrarily close to $d_g$.

Fix $p,q\in D$ and let $m=(p+q)/2$. If $[p,z]$ meets a curve $b([0,1])\subset K$, then $z=p+\alpha(b(\theta)-p)$, $\theta\in[0,1],\ \alpha\geq1$. For $1\leq\alpha\leq j$, this is a Lipschitz image of a two-dimensional rectangle and has zero three-dimensional measure.
The countable union over integers $j$ and finite union over the curves therefore give a null set $C_p$ of bad points. The analogous set $C_q$ is also null. Every ball about $m$ contains a point $z\notin C_p\cup C_q$, so $[p,z]\cup[z,q]\subset D$.
Continuity of segment length in its endpoints gives $\ell_g([p,z])+\ell_g([z,q])\longrightarrow\ell_g([p,q]) \ (z\to m)$.

Given $x,y\in D$ and $\eta>0$, approximate a nearly shortest path by a polygon of length less than $d_g(x,y)+\eta/2$.
Its finitely many interior vertices can be perturbed into $D$ while preserving this strict bound, since $K$ has zero measure and segment lengths are continuous. If the polygon has $N$ segments, replace each by the avoiding two-segment path above with extra length less than $\eta/(2N)$. The resulting path lies in $D$ and has length less than $d_g(x,y)+\eta$. This proves path connectedness, and letting $\eta\downarrow0$ proves \eqref{eq:distance-equality}.
Finally, $K\Subset\Omega$ gives $E\subset D$, hence
\[
 d_{g,D}(x,E)
 =\inf_{y\in E}d_{g,D}(x,y)
 =\inf_{y\in E}d_g(x,y)
 =d_g(x,E)=r(x).
\]
\end{proof}

For time-independent wave operators, unique continuation gives the following.
If $M$ is a connected smooth Riemannian manifold, $V\in L^\infty_{\loc}(M)$, and 
\[
(\partial_t^2-\Delta_g+V)z=0,
 \qquad z\in H^1_{\loc}(M\times(-S,S)),
\]
 then vanishing on $\omega\times(-S,S)$ for a nonempty open set $\omega\subset M$ implies vanishing where
\begin{equation}
 d_{g,M}(x,\omega)+|t|<S.
 \label{eq:tataru}
\end{equation}
This follows from \cite[Theorem~3.24]{LL2023}, based on the local theorem of Tataru~\cite{Tataru1995}. Applying this result on suitably shifted time intervals yields the following one-sided continuation property for $P$.

\begin{lemma}\label{lem:one-sided}
Let $D\subset\R^3$ be a connected open set, $\omega\subset D$ a nonempty open set, and $b\in\R$. Suppose 
\[
Pv=0\quad\text{in }D\times\R,\qquad
 v\in C^\infty(D\times\R),\qquad
 v=0\quad\text{on }\omega\times(-\infty,b).
\]
 Then
\begin{equation}
 v(x,t)=0\quad\text{if }t+d_{g,D}(x,\omega)<b.
 \label{eq:one-sided}
\end{equation}
\end{lemma}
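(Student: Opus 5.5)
The plan is to reduce the one-sided statement to the two-sided continuation result \eqref{eq:tataru} by a time shift. First I will put $P$ in the form required there. Since $P=n\partial_t^2-\Delta$ with $n$ smooth and positive, dividing by $n$ gives $n^{-1}Pv=\partial_t^2v-n^{-1}\Delta v$. On $M=D$ with the metric $g=n\,\dd x^2$, in three dimensions one has $\Delta_g=n^{-1}\Delta+\tfrac12 n^{-2}\nabla n\cdot\nabla$. The expression therefore differs from $\partial_t^2-\Delta_g$ by a smooth first-order spatial term. The cleanest fix is a conformal (Liouville-type) conjugation $v=\phi w$ with a suitable smooth positive $\phi$, which removes the first-order term and leaves only a smooth potential $V\in L^\infty_{\loc}(D)$. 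Alternatively, one can cite the version of \cite[Theorem~3.24]{LL2023} that allows lower-order terms with time-independent smooth coefficients. Either way, the zero set of $v$ and of $w$ is the same, and the principal part, and hence the distance $d_{g,D}$, is unchanged. Smoothness of $v$ gives $w\in H^1_{\loc}(D\times\R)$.

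Next comes the time shift. Fix $(x_0,t_0)$ with $t_0+d_{g,D}(x_0,\omega)<b$, and choose $S$ and a center $c$ with $c+S=b$ such that $t_0\in(c-S,c+S)$ and $d_{g,D}(x_0,\omega)+|t_0-c|<S$. Set $\tilde w(x,s)=w(x,s+c)$. Because the coefficients are time-independent, $\tilde w$ solves the same equation on $D\times(-S,S)$. It vanishes on $\omega\times(-S,S)$, since $s+c<c+S=b$. Applying \eqref{eq:tataru} at the point $(x_0,t_0-c)$ then gives $v(x_0,t_0)=0$.

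The main obstacle is the choice of $c$ and $S$. We need $d+|t_0-c|<b-c$, where $d=d_{g,D}(x_0,\omega)<b-t_0$. Take $c\le t_0$, so that $|t_0-c|=t_0-c$ and the inequality becomes $d+t_0-c<b-c$, i.e.\ $d+t_0<b$, which is exactly the hypothesis. Any $c<t_0$, for example $c=t_0-1$, together with $S=b-c>0$ works, since then $t_0\in(c-S,c+S)$ because $t_0<b=c+S$. Since $D$ is connected, $D$ qualifies as a connected manifold and $d_{g,D}(x_0,\omega)<\infty$, so the cited theorem applies. This would complete the proof.
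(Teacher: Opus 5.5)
Your proposal is correct and follows the paper's proof. The paper conjugates with the explicit weight $\varpi=n^{1/4}$ (the weight your unspecified $\phi$ has to be) to get $\partial_t^2-\Delta_g+V$ with smooth $V$, and then applies \eqref{eq:tataru} on a time-shifted window; the only difference is cosmetic, since the paper centers that window at $t_0$ with $d_{g,D}(x_0,\omega)<S<b-t_0$ rather than at some $c<t_0$ with $c+S=b$, and both choices keep the window inside $(-\infty,b)$.
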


\begin{proof}
For $g=n\dd x^2$ in dimension three, $\Delta_g=n^{-1}\Delta+\frac{1}{2}n^{-2}\nabla n\cdot\nabla$. Set $\varpi=n^{1/4}$ and $v=\varpi z$. A direct computation gives
\begin{equation}
 \varpi^{-1}n^{-1}P(\varpi z)
 =\partial_t^2z-\Delta_g z+Vz,
 \qquad V=-n^{-5/4}\Delta(n^{1/4}).
 \label{eq:conjugation}
\end{equation}
The potential is smooth. Fix $(x,t)$ satisfying \eqref{eq:one-sided} and choose $d_{g,D}(x,\omega)<S<b-t$. The time interval $(t-S,t+S)$ is contained in $(-\infty,b)$. Apply \eqref{eq:tataru} to $z(x,t+\tau)$ on $-S<\tau<S$ and evaluate at $\tau=0$. This proves the assertion.
\end{proof}

We now combine these facts. The argument first identifies the source in the visible region
\begin{equation}
 \mathcal V_T=\{(x,t):t+r(x)<T\}.
 \label{eq:visible-region}
\end{equation}

\begin{proof}[Proof of Theorem~\ref{thm:main}]
Set $w=u_1-u_2$, $F=F_1-F_2$, and
\begin{equation}
 K=a_1([0,T_0])\cup a_2([0,T_0]),\qquad
 D=\R^3\setminus K.
 \label{eq:punctured-domain}
\end{equation}
By Lemma~\ref{lem:distance}, $K$ is compact and has zero three-dimensional measure, and $D$ is connected.
Using the mollifier in \eqref{eq:mollification}, define $w_\eps=\rho_\eps*_tw$ and $F_\eps=\rho_\eps*_tF$.
Lemma~\ref{lem:smoothing} gives $Pw_\eps=F_\eps$,
$w_\eps\in C^\infty(\R;L^2(\R^3))$, and $w_\eps\in C^\infty(D\times\R)$.

Finite propagation makes the pressure traces vanish near $t=0$. Together with causality and the measured equality, this gives $w|_\Gamma=0$ for $t<T$.
Since the boundary trace commutes with temporal convolution, $w_\eps|_\Gamma=0$ for $t<T-\eps$.
The exterior wave equation has zero initial and Dirichlet boundary data. The wave energy estimate
\cite[Chapter~6]{Taylor2011}, applied on each connected component of $E$, therefore gives $w_\eps=0$ on $E\times(-\infty,T-\eps)$.

Since $Pw_\eps=0$ on $D\times\R$, Lemmas~\ref{lem:distance} and~\ref{lem:one-sided} imply
\begin{equation}
 w_\eps(x,t)=0
 \qquad\text{if }x\notin K
 \text{ and }t+r(x)<T-\eps.
 \label{eq:off-curve-zero}
\end{equation}
Because $w_\eps$ is locally integrable and $K\times\R$ has zero measure, it vanishes distributionally throughout $\mathcal V_{T-\eps}$. Consequently, $F_\eps=Pw_\eps=0$ there.

For any $\varphi\in C_c^\infty(\mathcal V_T)$, compactness and continuity of $r$ give $t+r(x)\leq T-\delta$ on $\supp\varphi$
for some $\delta>0$.
Thus $\langle F_\eps,\varphi\rangle=0$ for $\eps<\delta$.
Since $F_\eps\to F$ in the sense of distributions, we obtain $F=0$ in $\mathcal V_T$.

If $T\geq T_0+R_0$, then 
\[
t+r(a_j(t))\leq t+R_0<T_0+R_0\leq T
 \qquad(j=1,2,\ 0<t<T_0).
\]
 Hence $(a_j(t),t)\in\mathcal V_T$ for $j=1,2$ and $0<t<T_0$.
Since $\lambda_j\in L^1(0,T_0)$, the sources have no mass at $t=0$ or $t=T_0$.
It follows that $F_1=F_2$.

Choose $\chi\in C_c^\infty(\R^3)$ equal to one near $D_0$. Testing $F_1=F_2$ against $\chi(x)\eta(t)$ and $x_m\chi(x)\eta(t)$,
for $\eta\in C_c^\infty(0,T_0)$ and $m=1,2,3$, gives
\begin{equation}
 \lambda_1=\lambda_2=:\lambda,\qquad
 \lambda(t)(a_1(t)-a_2(t))=0
 \quad\text{a.e. in }(0,T_0).
 \label{eq:mass-equality}
\end{equation}
Continuity of the trajectories implies $a_1=a_2$ on $\supp(|\lambda|\dd t)$.
By \eqref{eq:activity}, this support is $[0,T_0]$.
Thus \eqref{eq:atomic-equality} holds, completing the proof.
\end{proof}

\begin{remark}\label{rem:multiple-source}
The same argument can be extended to a finite sum of moving point sources
\[
 F_j(x,t)=\sum_{\ell=1}^{N_j}\lambda_{j,\ell}(t)
                   \delta_{a_{j,\ell}(t)}(x),\qquad j=1,2,
\]
where each pair satisfies \eqref{eq:admissible}. The union of their spatial trajectory traces is still a finite union of compact Lipschitz curve images,
so the same regularization and continuation argument applies.
Hence equality of the boundary pressure traces on $\Gamma\times(0,T)$ implies $F_1=F_2$ in $\D'(\mathcal V_T)$.
If $T\geq T_0+R_0$, the complete source distributions coincide.

This determines the total source distribution.
However, collisions, cancellation, and ambiguity in source labeling may prevent unique recovery of the individual trajectories and intensities.
\end{remark}

\section{Inverse moving source problem: four-point measurements}\label{sec:four-stability}
\label{sec:stability}
Whole-boundary measurements are often difficult to obtain in practice, and only finitely many receivers are available.
We therefore study conditional joint Lipschitz stability from pressure waveforms at four fixed point receivers in this section and develop reconstruction methods in Section~\ref{sec:algorithms}.

Consider a single moving point source with real-valued intensity in a compact convex region $D\Subset\Omega$ and four distinct fixed receivers $x_1,\ldots,x_4\in\R^3\setminus D$.
Receivers on $\Gamma$ are included as a special case.
Let $G$ be the causal Green function normalized by $P_xG(x,y,t)=\delta_y(x)\delta_0(t)$.
For a fixed time interval $(0,H)$, we impose the following propagation conditions on the medium and receiver configuration:
\begin{enumerate}
\renewcommand{\labelenumi}{(\roman{enumi})}
\item For $y\in D$ and $0<t<H$, the Green function admits the exact distributional representation
\begin{equation}
G(x_i,y,t)=A_i(y)\delta(t-\tau_i(y))
+\one_{\{t\geq\tau_i(y)\}}V_i(y,t),\qquad i=1,\ldots,4.
\label{eq:Green-hypothesis}
\end{equation}
Here $\tau_i=d_g(\cdot,x_i)>0$ is $C^2$ near $D$, $A_i$ is $C^1$ near $D$ with $A_i\geq A_*>0$, and $V_i$ has a $C^1$ extension near $D\times[0,H]$, with fixed bounds on these norms.
\item The travel-time geometry is uniformly nondegenerate:
\begin{equation}
Q(y)=\begin{pmatrix}
\nabla\tau_1(y)^T&1\\
\nabla\tau_2(y)^T&1\\
\nabla\tau_3(y)^T&1\\
\nabla\tau_4(y)^T&1
\end{pmatrix},\qquad
\inf_{y\in D}\sigma_{\min}(Q(y))\geq\kappa>0.
\label{eq:travel-rank}
\end{equation}
\end{enumerate}
Here $\sigma_{\min}(Q(y))$ denotes the smallest singular value of $Q(y)$.
Representation \eqref{eq:Green-hypothesis} allows a $C^1$ scattered tail but excludes additional singular arrivals at the receivers before time $H$.

For fixed $q_0\in C_c^\infty(\Omega)$ and $H>\max_{i,y\in D}|x_i-y|$, the medium $n_\eps=1+\eps q_0$ satisfies \eqref{eq:Green-hypothesis} for all sufficiently small $|\eps|$.
The rank bound \eqref{eq:travel-rank} is also preserved under sufficiently small perturbations, provided it holds in the homogeneous reference medium.
Both assertions are proved in Proposition~\ref{prop:small-medium} in Appendix~\ref{app:Green}.
For example, the Euclidean rank condition holds when the receivers lie at the vertices of a regular tetrahedron and $D$ is a sufficiently small ball around the center of the tetrahedron.

We further impose the following conditions on the source and observation intervals:
\begin{enumerate}
\renewcommand{\labelenumi}{(\roman{enumi})}
\setcounter{enumi}{2}
\item For fixed $a_-,a_+\in D$ and positive constants $M_a,M_\lambda,\lambda_*,\theta$, the source satisfies
\begin{equation}
\begin{gathered}
a\in W^{2,\infty}(0,T_0;\R^3),\quad a([0,T_0])\subset D,\quad
\|a\|_{W^{2,\infty}}\leq M_a,\quad a(0)=a_-,\quad a(T_0)=a_+,\\
\lambda\in W^{1,\infty}(0,T_0),\quad
\|\lambda\|_{W^{1,\infty}}\leq M_\lambda,\quad
\lambda(s)\geq\lambda_*>0,\\
\beta_i(s):=1+\nabla\tau_i(a(s))\cdot a'(s)\geq\theta>0
\quad(1\leq i\leq4).
\end{gathered}
\label{eq:four-prior}
\end{equation}
\item The time interval and receiver observation intervals satisfy
\begin{equation}
H>T_0+\max_{i,y\in D}\tau_i(y),\qquad
I_i=(\tau_i(a_-),\,T_0+\tau_i(a_+)).
\label{eq:four-time}
\end{equation}
\end{enumerate}
The bound $\sqrt{n(a(s))}|a'(s)|\leq1-\theta$ implies $\beta_i(s)\geq\theta$.
The prescribed endpoints ensure that all admissible sources have the same arrival intervals, although the waveforms may have jumps at the interval endpoints.
The measurement window must contain every $I_i$, which may require a longer observation time than the threshold in Theorem~\ref{thm:main}.

\begin{theorem}\label{thm:four-Lip}
Let $x_1,\ldots,x_4\in\R^3\setminus D$ be four distinct fixed receivers, and assume \eqref{eq:Green-hypothesis}--\eqref{eq:four-time} hold.
Let $(a_j,\lambda_j)$, $j=1,2$, be two admissible source pairs whose trajectories have the common prescribed endpoints $a_j(0)=a_-$ and $a_j(T_0)=a_+$, and let $u_j$ be their causal fields.
Then there exists a constant $C>0$, depending only on the prescribed bounds on $\tau_i,A_i,V_i$, $\kappa,A_*,\theta,\lambda_*,M_a,M_\lambda,D$ and $T_0$, such that
\begin{equation}
 \|a_1-a_2\|_{W^{1,\infty}(0,T_0)}
   +\|\lambda_1-\lambda_2\|_{L^\infty(0,T_0)}
 \leq
 C\max_{1\leq i\leq4}
 \|u_1(x_i,\cdot)-u_2(x_i,\cdot)\|_{L^\infty(I_i)}.
 \label{eq:four-Lip}
\end{equation}
\end{theorem}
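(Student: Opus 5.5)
We start from the representation of the receiver waveforms given by \eqref{eq:Green-hypothesis}. For $t\in I_i$ (indeed for $0<t<H$),
\[
u(x_i,t)=\int_0^{T_0}\lambda(s)G(x_i,a(s),t-s)\dd s
=\frac{\lambda(s_i(t))A_i(a(s_i(t)))}{\beta_i(s_i(t))}
+\int_0^{s_i(t)}\lambda(s)V_i(a(s),t-s)\dd s .
\]
Here $s_i(t)$ is the unique root of $\phi_i(s):=s+\tau_i(a(s))=t$. Uniqueness holds because $\phi_i'=\beta_i\ge\theta$, and $\phi_i$ maps $[0,T_0]$ onto $\overline{I_i}$ because the endpoints are prescribed. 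Substituting $t=\phi_i(s)$, the data become, for $s\in(0,T_0)$,
\begin{equation*}
m_i(s):=u(x_i,\phi_i(s))=\frac{\lambda(s)A_i(a(s))}{\beta_i(s)}+\int_0^{s}\lambda(\sigma)V_i\bigl(a(\sigma),\phi_i(s)-\sigma\bigr)\dd\sigma .
\end{equation*}
The plan is to treat this as a nonlinear Volterra system. The pointwise part is an algebraic relation between $(\lambda,a,a')$. The integral part is a Volterra remainder that is Lipschitz in the past history.

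\textbf{Step 1 (linearize the leading term).} Write $\mu_i(s)=m_i(s)/(\lambda(s)A_i(a(s)))$, which is close to $1/\beta_i$ up to the tail, or more directly $\lambda A_i(a)/m_i^{\rm lead}=\beta_i=1+\nabla\tau_i(a)\cdot a'$. Introduce the unknown vector $X(s)=(a'(s),\,\ell(s))$ with $\ell=\log\lambda$ or a similar scaling. The four relations then read $\nabla\tau_i(a)\cdot a'+1=\lambda A_i(a)/(m_i-\text{tail}_i)$. Taking differences between $j=1,2$ and using the bounds $A_i\ge A_*$, $\lambda\ge\lambda_*$, $\beta_i\ge\theta$ (so $m_i$ is bounded below by a positive constant once the tail is small, or after a standard rearrangement), the difference $(a_1'-a_2',\lambda_1-\lambda_2)$ satisfies a linear system whose matrix is, to leading order, $Q(a(s))$ up to invertible diagonal scalings. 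Concretely, multiply through by $\beta_i$ and take the linearization in $(a',\lambda)$: the coefficient of $\delta a'$ is $\lambda^{-1}\nabla\tau_i^T$ times a factor, and the coefficient of $\delta\lambda$ is $\lambda^{-1}$. The rank condition \eqref{eq:travel-rank} gives $\sigma_{\min}\ge\kappa$, which yields
\[
|\delta a'(s)|+|\delta\lambda(s)|\le C\Bigl(\max_i|\delta m_i(s)|+|\delta a(s)|+\int_0^s(|\delta a|+|\delta\lambda|)\dd\sigma\Bigr),
\]
with $C$ depending on the stated bounds. The terms in $\delta a(s)$ come from the $C^1$ regularity of $A_i$ and $\nabla\tau_i$ (using $\tau_i\in C^2$). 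The integral comes from the $C^1$ bounds on $V_i$, including the dependence through $\phi_i(s)$, which involves $\delta a(s)$.

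\textbf{Step 2 (compare data in different time variables).} The data are given in $t$, whereas $m_i^{(1)},m_i^{(2)}$ use different reparametrizations $\phi_i^{(j)}$. We write
\[
\delta m_i(s)=\bigl[u_1-u_2\bigr](x_i,\phi_i^{(1)}(s))+\bigl[u_2(x_i,\phi_i^{(1)}(s))-u_2(x_i,\phi_i^{(2)}(s))\bigr].
\]
The second bracket is bounded by $\|\partial_tu_2(x_i,\cdot)\|_\infty|\tau_i(a_1(s))-\tau_i(a_2(s))|\le C|\delta a(s)|$. Here $u_2(x_i,\cdot)$ is Lipschitz on $I_i$ by the explicit formula, since $\lambda\in W^{1,\infty}$, $a\in W^{2,\infty}$ and $V_i\in C^1$; this is exactly where the higher prior regularity is used. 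The first bracket is bounded by the right side of \eqref{eq:four-Lip}.

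\textbf{Step 3 (Gronwall).} Set $e(s)=|\delta a(s)|+\int_0^s(|\delta a'|+|\delta\lambda|)$. Since $\delta a(0)=0$, we have $|\delta a(s)|\le\int_0^s|\delta a'|$. Combining Steps 1 and 2 gives $|\delta a'|+|\delta\lambda|\le C\varepsilon_{\rm data}+C\int_0^s(|\delta a'|+|\delta\lambda|)$, and Gronwall's inequality gives $\|\delta a'\|_\infty+\|\delta\lambda\|_\infty\le Ce^{CT_0}\varepsilon_{\rm data}$. The $W^{1,\infty}$ bound on $\delta a$ then follows.

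The main obstacle is Step 1. The leading relation is nonlinear, and the matrix that actually appears is not literally $Q$ but a row-scaled version $\operatorname{diag}(c_i)\,Q$ composed with a column scaling, with $c_i$ bounded above and below. We must check that the singular-value bound survives this scaling with constants depending only on the priors. To handle this, we will first solve the pointwise relations for the variables $(\lambda a',\lambda)$, in which the relation $\lambda A_i(a)=\tilde m_i(\lambda+\nabla\tau_i\cdot\lambda a')$ (with $\tilde m_i$ the data minus the tail) is linear. The row scalings by $\tilde m_i$ are controlled because $\tilde m_i=\lambda A_i/\beta_i\ge\lambda_*A_*/C$. This gives the estimate exactly linearly, after which we convert back to $(a',\lambda)$ using $\lambda\ge\lambda_*$.
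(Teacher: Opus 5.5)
Your plan follows the paper's proof closely. It uses the Volterra identity from the Dirac change of variables and the uniform Lipschitz bound on the waveforms (Lemma~\ref{lem:waveform-Lip}) to compare data at the two arrival times. It then bounds the tail difference by a Volterra term, inverts a pointwise $4\times4$ system through \eqref{eq:travel-rank}, and closes with Gronwall using $a_1(0)=a_2(0)$.

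The conclusion of your Step~1 is correct, but the repair in your last paragraph is not. The identity $\lambda A_i(a)=\tilde m_i(\lambda+\nabla\tau_i\cdot\lambda a')$ is false. From $\tilde m_i\beta_i=\lambda A_i(a)$ one gets $\tilde m_i(\lambda+\nabla\tau_i\cdot\lambda a')=\lambda^2A_i(a)$, which is not linear in $(\lambda a',\lambda)$. Also, the matrix is not $Q$ between two diagonal scalings.

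The fix is simpler than you expected. The relation $\tilde m_i\beta_i=\lambda A_i(a)$ is already linear in $(a',\lambda)$:
\[
\tilde m_i\nabla\tau_i(a)\cdot a'-A_i(a)\lambda=-\tilde m_i .
\]
Its matrix factors as in \eqref{eq:matrix-factor}:
\[
\bigl[\tilde m_i\nabla\tau_i(a)^T,\,-A_i(a)\bigr]_{i=1}^4=\operatorname{diag}\Bigl(\frac{A_i(a)}{\beta_i}\Bigr)Q(a)\begin{pmatrix}\lambda I_3&-a'\\0&-1\end{pmatrix}.
\]
The right factor is a shear, not a diagonal matrix. Its inverse is $\begin{pmatrix}\lambda^{-1}I_3&-\lambda^{-1}a'\\0&-1\end{pmatrix}$, which is bounded in terms of $\lambda_*$ and $M_a$. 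Subtracting the two exact systems then gives exactly \eqref{eq:exact-difference}.

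If you want a substitution that removes the column factor entirely, use the variables $(a'/\lambda,1/\lambda)$. The relation becomes $\tilde m_i\bigl(\nabla\tau_i(a)\cdot(a'/\lambda)+1/\lambda\bigr)=A_i(a)$, whose matrix is exactly $\operatorname{diag}(\tilde m_i)\,Q(a)$. The map back to $(a',\lambda)$ is Lipschitz because $\lambda_*\le\lambda\le M_\lambda$.

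With either fix, the rest of your argument goes through as in the paper. Two small points:
\begin{itemize}
\item The bound $\tilde m_i\ge\lambda_*A_*/\sup\beta_i$ holds exactly, so no smallness of the tail is needed.
\item Mean-value bounds such as $|A_i(a_1)-A_i(a_2)|\le C|a_1-a_2|$ rely on the convexity of $D$.
\end{itemize}
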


We first derive an exact Volterra identity for the receiver waveforms.
Set 
\[
r_{ji}(s)=s+\tau_i(a_j(s)),\qquad
\beta_{ji}(s)=r_{ji}'(s),\qquad
w_{ji}(t)=u_j(x_i,t).
\]
 By \eqref{eq:four-prior}, $r_{ji}$ is a strictly increasing bi-Lipschitz map from $[0,T_0]$ onto $\overline I_i$.
The prescribed endpoints ensure that this arrival interval is the same for both sources.
Substituting \eqref{eq:Green-hypothesis} into the Green representation of $u_j$ gives, for $0<s<T_0$,
\begin{equation}
\begin{split}
w_{ji}(r_{ji}(s))
&=\frac{\lambda_j(s)A_i(a_j(s))}{\beta_{ji}(s)}+R_{ji}(s),\\
R_{ji}(s)
&=\int_0^s\lambda_j(\sigma)V_i(a_j(\sigma),r_{ji}(s)-\sigma)\dd\sigma.
\end{split}
\label{eq:Volterra-identity}
\end{equation}
The direct term follows from the change-of-variables formula for the Dirac distribution.
For the tail, strict monotonicity makes the Heaviside condition $\sigma+\tau_i(a_j(\sigma))\leq r_{ji}(s)$ equivalent to $\sigma\leq s$.

Define the tail-subtracted quantity by
\begin{equation}
p_{ji}(s)=w_{ji}(r_{ji}(s))-R_{ji}(s)
=\frac{\lambda_j(s)A_i(a_j(s))}{\beta_{ji}(s)}.
\label{eq:direct-pressure}
\end{equation}
Both $p_{ji}$ and $\beta_{ji}$ are uniformly bounded above and away from zero, with bounds determined by the above assumptions.

\begin{lemma}\label{lem:waveform-Lip}
Each waveform $w_{ji}$ is Lipschitz continuous on $I_i$ with a uniform constant $L_w$ and extends continuously to $\overline I_i$.
\end{lemma}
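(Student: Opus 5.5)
We use the Volterra identity \eqref{eq:Volterra-identity} and the bi-Lipschitz change of variables $t=r_{ji}(s)$. Since $r_{ji}$ is strictly increasing with $\theta\le\beta_{ji}\le 1+\|\nabla\tau_i\|_\infty M_a$, it maps $[0,T_0]$ onto $\overline I_i$, and its inverse $s_{ji}=r_{ji}^{-1}$ is Lipschitz with constant $\theta^{-1}$. Hence for $t\in I_i$,
\[
 w_{ji}(t)=\frac{\lambda_j(s)A_i(a_j(s))}{\beta_{ji}(s)}+R_{ji}(s),\qquad s=s_{ji}(t),
\]
and it suffices to show that the right-hand side is a Lipschitz function of $s$ on $(0,T_0)$. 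The constant must depend only on the prescribed bounds. A continuous extension to $\overline I_i$ then follows automatically, since Lipschitz functions on an interval are uniformly continuous.

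For the direct term, $\lambda_j\in W^{1,\infty}$ with norm at most $M_\lambda$. The map $s\mapsto A_i(a_j(s))$ is Lipschitz with constant $\|A_i\|_{C^1}M_a$, since $A_i\in C^1$ near $D$ and $a_j([0,T_0])\subset D$. The factor $\beta_{ji}(s)=1+\nabla\tau_i(a_j(s))\cdot a_j'(s)$ is Lipschitz because $\tau_i\in C^2$ and $a_j\in W^{2,\infty}$. Its constant is bounded by $\|\tau_i\|_{C^2}M_a^2+\|\tau_i\|_{C^1}M_a$. Since $\beta_{ji}\ge\theta$, the reciprocal $1/\beta_{ji}$ is Lipschitz with constant $\theta^{-2}$ times that of $\beta_{ji}$. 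The direct term is a product of bounded Lipschitz functions and is therefore Lipschitz.

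For the tail, take $0<s<s'<T_0$ and split the difference as
\[
 R_{ji}(s')-R_{ji}(s)=\int_s^{s'}\lambda_j(\sigma)V_i(a_j(\sigma),r_{ji}(s')-\sigma)\dd\sigma
 +\int_0^s\lambda_j(\sigma)\bigl[V_i(a_j(\sigma),r_{ji}(s')-\sigma)-V_i(a_j(\sigma),r_{ji}(s)-\sigma)\bigr]\dd\sigma .
\]
For $\sigma\le s$, the argument $r_{ji}(s)-\sigma\ge\tau_i(a_j(\sigma))$ lies in $[0,H]$, because $r_{ji}(s)<T_0+\max\tau_i<H$ by \eqref{eq:four-time}. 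The pairs therefore stay in the region where $V_i$ has its $C^1$ extension. The first integral is bounded by $M_\lambda\|V_i\|_\infty|s'-s|$. The second is bounded by $T_0M_\lambda\|\partial_tV_i\|_\infty\,\|\beta_{ji}\|_\infty|s'-s|$. Thus $R_{ji}$ is Lipschitz.

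The only delicate point is to justify \eqref{eq:Volterra-identity} pointwise on all of $I_i$, rather than just almost everywhere. Because the tail is defined with the sharp Heaviside cutoff, it is worth checking that no singular contribution survives. This holds because the only singular part of $G$ is the delta term, which was already accounted for. The claim then follows with
\[
 L_w=\theta^{-1}\bigl(\mathrm{Lip}(\text{direct})+\mathrm{Lip}(R_{ji})\bigr),
\]
uniformly in $j$ and $i$.
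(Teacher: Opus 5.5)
Your proposal is correct and follows essentially the same route as the paper: it bounds the direct term using the formula for $\beta_{ji}'$ and $\beta_{ji}\geq\theta$, and the tail using the $C^1$ bounds on $V_i$ together with \eqref{eq:four-time}, then transfers both bounds to observation time through the $\theta^{-1}$-Lipschitz inverse of $r_{ji}$. The only difference is cosmetic: you estimate difference quotients in $s$, with your first tail integral playing the role of the boundary term $\lambda_j V_i(a_j,\tau_i(a_j))/\beta_{ji}$ in \eqref{eq:tail-derivative}, whereas the paper bounds the a.e.\ derivative in $t$ directly.
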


\begin{proof}
The derivative of the arrival factor satisfies 
\[
\beta_{ji}'=(a_j')^TD^2\tau_i(a_j)a_j'
+\nabla\tau_i(a_j)\cdot a_j''
\quad\text{a.e.}
\]
 The assumptions therefore give a uniform bound on $p_{ji}'$.
Since $\beta_{ji}\geq\theta$, the observation-time derivative of $p_{ji}\circ r_{ji}^{-1}$ is also uniformly bounded.
For the tail, put $s=r_{ji}^{-1}(t)$.
Differentiation of its integral gives
\begin{equation}
\frac{\dd}{\dd t}R_{ji}(r_{ji}^{-1}(t))
=\frac{\lambda_j(s)V_i(a_j(s),\tau_i(a_j(s)))}{\beta_{ji}(s)}
+\int_0^s\lambda_j(r)\partial_tV_i(a_j(r),t-r)\dd r.
\label{eq:tail-derivative}
\end{equation}
Both terms are uniformly bounded.
Together with \eqref{eq:Volterra-identity}, this proves the uniform Lipschitz bound on $I_i$.
The continuous extension to $\overline I_i$ follows.
\end{proof}

We now turn to the proof of Theorem ~\ref{thm:four-Lip}.
\begin{proof}
Set 
\[
e_a=a_1-a_2,\qquad \mu=\lambda_1-\lambda_2,\qquad
\delta=\max_i\|w_{1i}-w_{2i}\|_{L^\infty(I_i)}.
\]
 For $0<s<T_0$, both $r_{1i}(s)$ and $r_{2i}(s)$ lie in $I_i$.
Using the continuous representatives of the waveforms and Lemma~\ref{lem:waveform-Lip}, we obtain
\begin{equation}
\begin{split}
|w_{1i}(r_{1i}(s))-w_{2i}(r_{2i}(s))|
&\leq\delta+L_w|r_{1i}(s)-r_{2i}(s)|\\
&\leq\delta+C|e_a(s)|.
\end{split}
\label{eq:wave-compare}
\end{equation}
The $C^1$ extension of $V_i$ on a neighborhood of the compact product $D\times[0,H]$, convexity of $D$, and \eqref{eq:four-time} give
\[
|R_{1i}(s)-R_{2i}(s)|
\leq C|e_a(s)|
+C\int_0^s\big(|e_a(r)|+|\mu(r)|\big)\dd r.
\]
Indeed, the two kernel time arguments differ by $\tau_i(a_1(s))-\tau_i(a_2(s))$, and their source arguments differ by $e_a(r)$.
All the compared arguments remain in the stated compact product.
It follows that
\begin{equation}
|p_{1i}(s)-p_{2i}(s)|
\leq\delta+C|e_a(s)|
+C\int_0^s\big(|e_a(r)|+|\mu(r)|\big)\dd r.
\label{eq:p-compare}
\end{equation}

For shorter notation, write $A_{ji}=A_i(a_j(s))$, $b_{ji}=\nabla\tau_i(a_j(s))$, and $v_j=a_j'(s)$.
Subtract the two identities $p_{ji}(1+b_{ji}\cdot v_j)=\lambda_jA_{ji}$.
Keeping the derivative difference on the left gives the exact equation
\begin{equation}
p_{1i}b_{1i}\cdot e_a'-A_{1i}\mu
=\lambda_2(A_{1i}-A_{2i})
-p_{1i}(b_{1i}-b_{2i})\cdot v_2-(p_{1i}-p_{2i})\beta_{2i}.
\label{eq:exact-difference}
\end{equation}
The matrix on the left, with rows $[p_{1i}b_{1i}^T,-A_{1i}]$, has the factorization
\begin{equation}
M_1=
\operatorname{diag}\left(\frac{A_{1i}}{\beta_{1i}}\right)_{i=1}^4
Q(a_1)
\begin{pmatrix}\lambda_1 I_3&-v_1\\0&-1\end{pmatrix}.
\label{eq:matrix-factor}
\end{equation}
The lower bounds on $A_{1i}$ and $\lambda_1$, the upper bounds on $\beta_{1i}$ and $v_1$, and \eqref{eq:travel-rank} give uniformly bounded inverses for all three factors.
Hence $\|M_1^{-1}\|\leq C$.

Combining \eqref{eq:exact-difference}, \eqref{eq:matrix-factor}, and \eqref{eq:p-compare} gives, for almost every $s$,
\begin{equation}
z(s):=|e_a'(s)|+|\mu(s)|
\leq C\delta+C|e_a(s)|
+C\int_0^s\big(|e_a(r)|+|\mu(r)|\big)\dd r.
\label{eq:z-inequality}
\end{equation}
Since $e_a(0)=0$, we have $|e_a(s)|\leq\int_0^s z(r)\dd r$.
Consequently, 
\[
z(s)\leq C\delta+C(1+T_0)\int_0^s z(r)\dd r.
\]
 Gronwall's inequality gives $\|z\|_{L^\infty(0,T_0)}\leq C\delta$.
Integrating $e_a'$ then yields $\|e_a\|_{L^\infty(0,T_0)}\leq C\delta$, which proves \eqref{eq:four-Lip}.
\end{proof}

\begin{remark}\label{rem:four-endpoints}
In the present proof, the common initial position is used in the Gronwall estimate, and the prescribed common endpoints ensure that the receiver arrival intervals coincide.
The waveform comparison is therefore performed within the same intervals, where the waveforms are uniformly Lipschitz continuous.
\end{remark}

\section{Reconstruction methods}\label{sec:algorithms}

For the fixed receiver configuration of Section~\ref{sec:four-stability}, we develop a Volterra--Euler reconstruction scheme based on \eqref{eq:Volterra-identity} and establish an error estimate under the assumptions of Theorem~\ref{thm:four-Lip}.
We also present a spectral reconstruction method based on the full outgoing Green function for any known coefficient satisfying \eqref{eq:medium}. It does not require the time-domain propagation assumptions of the Volterra--Euler scheme. Numerical experiments with time-domain and frequency-domain data demonstrate the effectiveness of both reconstruction methods.

\subsection{Volterra--Euler scheme}

At each emission-time step, we evaluate the tail using the reconstructed source history and solve a linear system for the current velocity and intensity.
The trajectory is then advanced by an Euler step.

Assume \eqref{eq:Green-hypothesis}--\eqref{eq:four-time} hold on a compact convex region $K\Subset\Omega$ containing the true trajectory.
We further require $\dist(a(s),\partial K)\geq\rho>0$ for $0\leq s\leq T_0$ and 
\[
\tau_{\min}=\min_{i,y\in K}\tau_i(y)>0,\qquad
T_0+\max_{i,y\in K}\tau_i(y)<H.
\]
 Write $|a'|\leq M_v$ and $\lambda_*\leq\lambda\leq\lambda^*$, retaining the derivative and arrival-factor bounds in \eqref{eq:four-prior}.

To allow tail evaluations for $t<\tau_i(y)$, define
\begin{equation}
V_i^e(y,t)=V_i(y,\max\{t,\tau_i(y)\}),\qquad (y,t)\in K\times[0,H].
\label{eq:tail-extension}
\end{equation}
This extension is Lipschitz continuous on $K\times[0,H]$.
Let $\widehat\tau_i$, $\widehat A_i$, and $\widehat V_i^e$ be approximations, with $\widehat\tau_i$ of class $C^1$ near $K$.
Assume that
\begin{equation}
\max_i\left(
\|\widehat\tau_i-\tau_i\|_{C^1(K)}
+\|\widehat A_i-A_i\|_{C^0(K)}
+\|\widehat V_i^e-V_i^e\|_{C^0(K\times[0,H])}
\right)\leq\epsilon_K.
\label{eq:kernel-error}
\end{equation}

For a single source, write $w_i(t)=u(x_i,t)$.
Suppose that samples are acquired on a time grid covering each arrival interval, with maximum spacing $\Delta$:
\begin{equation}
y_{ik}=w_i(t_{ik})+\xi_{ik},\qquad |\xi_{ik}|\leq\delta.
\label{eq:sample-noise}
\end{equation}
For each receiver, retain all samples selected by the cropping rule in Appendix~\ref{app:interior-sampling}, and require at least two retained samples.
Let $J_i$ be their piecewise linear interpolant, extended constantly beyond the first and last retained sample.
Let $W_i$ agree with $w_i$ on $I_i$ and extend constantly outside $I_i$ using its interior endpoint limits.
Lemma~\ref{lem:interpolation} gives $\|J_i-W_i\|_{L^\infty(\R)} \leq\eta:=\delta+L_w(\Delta+2\epsilon_K)$. The $2\epsilon_K$ term is omitted when the arrival interval endpoints are known exactly.

Set $h=T_0/N$, $s_n=nh$, and $a_0=a_-$.
For $n=0,\ldots,N-1$, compute
\begin{equation}
\begin{split}
\widehat r_i^n&=s_n+\widehat\tau_i(a_n),\\
R_i^n&=h\sum_{j=0}^{n-1}\ell_j
\widehat V_i^e(a_j,\widehat r_i^n-s_j),\\
p_i^n&=J_i(\widehat r_i^n)-R_i^n.
\end{split}
\label{eq:discrete-tail}
\end{equation}
The empty sum is understood as zero.
Set $p^n=(p_1^n,\ldots,p_4^n)^T$ and solve
\begin{equation}
\begin{gathered}
(M_n)_{i,:}=[p_i^n\nabla\widehat\tau_i(a_n)^T,-\widehat A_i(a_n)],\\
M_n\binom{v_n}{\ell_n}=-p^n+e_n^{\rm lin},
\quad |e_n^{\rm lin}|\leq\epsilon_{\rm lin},\qquad
a_{n+1}=a_n+hv_n.
\end{gathered}
\label{eq:discrete-system}
\end{equation}
Here $a_n$, $v_n$, and $\ell_n$ approximate $a(s_n)$, $a'(s_n)$, and $\lambda(s_n)$, respectively.
The linear system is solved using QR or SVD, and the residual $e_n^{\rm lin}$ is measured in the original unscaled system.

At each step, we impose the following checks.
Before solving, require $p_i^n\geq p_*/2$ for all $i$ and $\sigma_{\min}(M_n)\geq m_*/2$, where $p_*,m_*>0$ are defined in Appendix~\ref{app:guard-thresholds}.
After solving, require
\begin{equation}
\lambda_*/2\leq\ell_n\leq2\lambda^*,\qquad
|v_n|\leq M_v+1,\qquad
1+\nabla\widehat\tau_i(a_n)\cdot v_n\geq\theta/2
\quad(1\leq i\leq4).
\label{eq:post-guards}
\end{equation}
All spatial evaluation points and the new position $a_{n+1}$ must lie in $K$, and all time arguments of $\widehat V_i^e$ must lie in $[0,H]$.
Stop if any of these checks fails.
Direct evaluation of the history sums requires $O(N^2)$ tail evaluations and $O(N)$ storage.

Define the reconstructed functions by
\[
\begin{aligned}
a_h(s)&=a_n+(s-s_n)v_n,\\
\lambda_h(s)&=\ell_n,
\end{aligned}
\qquad s_n\leq s<s_{n+1},\qquad a_h(T_0)=a_N.
\]
Thus $a_h$ is piecewise linear and $\lambda_h$ is piecewise constant.

\begin{theorem}\label{thm:march-error}
Under the assumptions above, with the sampling and safeguard rules specified in Appendix~\ref{app:marching}, there exist constants $C,\varepsilon_0>0$, depending only on the prescribed propagation and source bounds, the spatial and temporal margins, and $T_0$, such that if
\begin{equation}
q_{\rm err}:=h+\delta+\Delta+\epsilon_K+\epsilon_{\rm lin}\leq\varepsilon_0,
\label{eq:error-budget}
\end{equation}
the scheme completes all steps without a failed check and
\begin{equation}
\|a_h-a\|_{W^{1,\infty}(0,T_0)}
+\|\lambda_h-\lambda\|_{L^\infty(0,T_0)}
\leq Cq_{\rm err}.
\label{eq:march-error}
\end{equation}
For sufficiently small $\varepsilon_0$, the reconstructed arrival times satisfy
\[
\frac{\dd}{\dd s}\big(s+\widehat\tau_i(a_h(s))\big)\geq\theta/2
\quad\text{for almost every }s\in(0,T_0),\qquad i=1,\ldots,4.
\]
\end{theorem}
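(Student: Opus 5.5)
The plan is a discrete analogue of the Gronwall argument in the proof of Theorem~\ref{thm:four-Lip}, combined with a bootstrap (induction) argument that keeps the iterates inside the region where the safeguards hold. Let $e_n=a_n-a(s_n)$, $f_n=v_n-a'(s_n)$, $g_n=\ell_n-\lambda(s_n)$. We prove by induction on $n$ the hypothesis
\[
|e_m|\leq C_1q_{\rm err},\qquad |f_m|+|g_m|\leq C_2 q_{\rm err}\quad(m<n),
\]
where $C_1,C_2$ will be fixed independently of $n$ and $q_{\rm err}$, and $\varepsilon_0$ is chosen afterwards so that $C_iq_{\rm err}$ is much smaller than the margins $\rho$, $\theta$, $\lambda_*$, $m_*$, $p_*$.

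Under the induction hypothesis, every spatial argument lies within $C_1\varepsilon_0<\rho$ of the true trajectory, so it lies in $K$. The time arguments also stay in $[0,H]$ by the temporal margins.

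\textbf{Consistency estimate for $p_i^n$.} Write
\[
p_i^n-p_i(s_n)=\big[J_i(\widehat r_i^n)-w_i(r_i(s_n))\big]-\big[R_i^n-R_i(s_n)\big].
\]
\begin{itemize}
\item For the first bracket, Lemma~\ref{lem:interpolation}, the Lipschitz bound of Lemma~\ref{lem:waveform-Lip}, and the relation $|\widehat r_i^n-r_i(s_n)|\leq \epsilon_K+C|e_n|$ give a bound $\eta+C|e_n|$.
\item For the second bracket, compare the Riemann sum $h\sum_j\ell_j\widehat V_i^e(a_j,\widehat r_i^n-s_j)$ with $\int_0^{s_n}\lambda V_i(a,r_i(s_n)-\sigma)\,d\sigma$. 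The quadrature error is $O(h)$, by the Lipschitz continuity of the extension \eqref{eq:tail-extension} and $\lambda\in W^{1,\infty}$. The kernel error is $O(\epsilon_K)$ by \eqref{eq:kernel-error}. The perturbation error is $C|e_n|+Ch\sum_{j<n}(|e_j|+|g_j|)$.
\end{itemize}
The extension is exactly what makes the quadrature legitimate near the Heaviside cutoff: the true integrand equals $V_i^e$ for $\sigma\leq s_n$. Together,
\[
|p_i^n-p_i(s_n)|\leq C(q_{\rm err}+|e_n|)+Ch\sum_{j<n}(|e_j|+|g_j|).
\]

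\textbf{Linear-system step.} The true quantities satisfy $M(s_n)(a'(s_n),\lambda(s_n))^T=-p(s_n)$. This is the identity $p_i(1+\nabla\tau_i\cdot a')=\lambda A_i$ rearranged, where $M(s_n)$ has rows $[p_i\nabla\tau_i^T,-A_i]$ evaluated along the true path. By the factorization \eqref{eq:matrix-factor}, $\|M(s_n)^{-1}\|\leq C$. Moreover,
\[
\|M_n-M(s_n)\|\leq C(\epsilon_K+|e_n|+|p^n-p(s_n)|)
\]
is small, so $\sigma_{\min}(M_n)\geq m_*/2$ holds, and the pre-solve checks pass once $p_*$ and $m_*$ are taken as half the true lower bounds, as in Appendix~\ref{app:guard-thresholds}. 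A standard perturbation of linear systems then gives
\[
|f_n|+|g_n|\leq C_3\Big(q_{\rm err}+|e_n|+h\sum_{j<n}(|e_j|+|g_j|)\Big).
\]
Here $C_3$ depends only on the prescribed bounds, not on $C_1$ or $C_2$.

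\textbf{Closing the induction.} The Euler update gives
\[
e_{n+1}=e_n+hf_n-\Big(a(s_{n+1})-a(s_n)-ha'(s_n)\Big),
\]
and the last term is $O(h^2M_a)$ since $a\in W^{2,\infty}$. Hence
\[
|e_{n+1}|\leq |e_n|+h|f_n|+Ch^2.
\]
With $z_n=|f_n|+|g_n|$, $e_0=0$, and $|e_n|\leq h\sum_{j<n}z_j+Cq_{\rm err}$, the discrete Gronwall inequality yields $z_n+|e_n|\leq Cq_{\rm err}$. The constant is $e^{CT_0}$ times data, which fixes $C_1$ and $C_2$ consistently. Smallness of $\varepsilon_0$ then verifies the post-solve guards \eqref{eq:post-guards}, for instance
\[
1+\nabla\widehat\tau_i(a_n)\cdot v_n\geq\theta-C\varepsilon_0\geq\theta/2.
\]

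\textbf{Passing to continuous norms.} On $[s_n,s_{n+1})$ we have $a_h'=v_n$ and $|a'(s)-a'(s_n)|\leq M_ah$, so $\|a_h'-a'\|_\infty\leq Cq_{\rm err}$. Similarly, $\lambda_h-\lambda$ is controlled by $|g_n|+M_\lambda h$, and $\|a_h-a\|_\infty$ is controlled by $\max_n|e_n|+Ch$. The arrival-factor bound for $a_h$ is the post-guard inequality above.

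\textbf{Main obstacle.} The expected difficulty is the bootstrap bookkeeping. The induction constants must not depend on the guard constants in a circular way, so the stability constant $\|M_n^{-1}\|$ and the Lipschitz constants must be taken from the true-path quantities plus small perturbations. The quadrature estimate for the tail with the approximated arrival time $\widehat r_i^n$ also requires care: the extension $V_i^e$ must handle the points where $\widehat r_i^n-s_j$ falls slightly below $\tau_i(a_j)$.
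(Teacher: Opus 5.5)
Your proposal is correct and follows essentially the same route as the paper's proof: the consistency estimate for $p^n$ (with $V_i^e$ absorbing the Heaviside cutoff and an $O(h)$ left-endpoint quadrature error), perturbation of the factorized linear system, the Euler recursion closed by a discrete Gronwall inequality, inductive verification of the checks, and passage to continuous norms.

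One small correction concerns the final monotonicity claim, which is not literally the post-solve guard. For $s\in(s_n,s_{n+1})$ the derivative is $1+\nabla\widehat\tau_i(a_h(s))\cdot v_n$, evaluated at $a_h(s)$ rather than at $a_n$, and perturbing from the node would cost the $\theta/2$ margin. You should instead compare directly with $\beta_i(s)\geq\theta$, using $\|a_h-a\|_{W^{1,\infty}}\leq Cq_{\rm err}$, $\|\widehat\tau_i-\tau_i\|_{C^1(K)}\leq\epsilon_K$, the Lipschitz bound on $\nabla\tau_i$, and $a_h(s)\in K$, as the paper does. Similarly, in the waveform term use the extended $W_i$, which is Lipschitz on all of $\R$, rather than $w_i$, since $\widehat r_i^n$ may leave $I_i$ and $w_i$ jumps at $r_i(0)$.
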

The proof is given in Appendix~\ref{app:marching}.
The scheme does not impose $a_N=a_+$, but \eqref{eq:march-error} also gives $|a_N-a_+|\leq Cq_{\rm err}$.

\subsection{Spectral reconstruction}\label{sec:batch}

Let $G_k$ be the outgoing Green function satisfying $(\Delta_x+k^2n)G_k(x,y)=-\delta_y(x)$.
With the Fourier convention $e^{ikt}$, the predicted receiver spectra at the selected positive frequencies $k_m$ are
\begin{equation}
\mathcal F_{im}(a,\lambda)
=\int_0^{T_0}\lambda(s)e^{ik_ms}G_{k_m}(x_i,a(s))\dd s,
\qquad i=1,\ldots,4.
\label{eq:spectral-forward}
\end{equation}
These are spectra of the complete receiver waveforms.
Using the full outgoing Green function accounts for both direct propagation and scattering by the medium.

We represent the trajectory and intensity by clamped cubic B-splines, allowing different numbers of basis functions: 
\[
a_c(s)=\sum_j P_jB_j^a(s),\qquad
\lambda_\ell(s)=\sum_j\ell_jB_j^\lambda(s).
\]
 Here $P_j\in\R^3$, $c$ collects the free trajectory control coordinates, and $\ell=(\ell_j)_j$ is the intensity coefficient vector.
We fix the first and last trajectory control points at $a_-$ and $a_+$, constrain the remaining control points to a prescribed box containing these endpoints, and impose $\lambda_*\leq\ell_j\leq\lambda^*$.
The convex-hull property keeps the trajectory in this box and ensures $\lambda_*\leq\lambda_\ell(s)\leq\lambda^*$ throughout the emission interval~\cite{deBoor2001}.

Let $\mathcal F$ collect the predicted spectra in \eqref{eq:spectral-forward}, and let $y\neq0$ denote the measured spectra.
With regularization parameters $\alpha_a,\alpha_\lambda>0$, we minimize
\begin{equation}
\Phi(c,\ell)
=\frac{\|\mathcal F(a_c,\lambda_\ell)-y\|_2^2}{2\|y\|_2^2}
+\frac{\alpha_a}{2}\int_0^{T_0}|a_c''(s)|^2\dd s+\frac{\alpha_\lambda}{2}\int_0^{T_0}|\lambda_\ell'(s)|^2\dd s.
\label{eq:spectral-objective}
\end{equation}
The last two terms penalize rapid changes in the velocity and intensity~\cite{EHN1996}.

For a fixed trajectory, the forward map is linear in $\ell$, so we use bounded variable projection~\cite{GP2003}:
\begin{equation}
\ell_*(c)\in\mathop{\rm argmin}_{\lambda_*\leq\ell_j\leq\lambda^*}
\Phi(c,\ell),\qquad
\min_c\Phi(c,\ell_*(c)).
\label{eq:variable-projection}
\end{equation}
The inner problem is solved by BVLS~\cite{SP1995}, using a real linear least-squares system formed from the real and imaginary data components and the intensity regularization.
A bound-constrained trust-region reflective method~\cite{BCL1999} updates the trajectory coefficients.

Differentiating \eqref{eq:spectral-forward} gives
\begin{equation}
D\mathcal F_{im}(a,\lambda)(\delta a,\delta\lambda)
=\int_0^{T_0}e^{ik_ms}
\Big(\delta\lambda(s)G_{k_m}(x_i,a(s))+\lambda(s)\nabla_yG_{k_m}(x_i,a(s))\cdot\delta a(s)\Big)\dd s.
\label{eq:spectral-Jacobian}
\end{equation}
Here $\delta a$ and $\delta\lambda$ are perturbations of the trajectory and intensity, and $\nabla_y$ differentiates with respect to the source position.
For a locally unchanged active set, coefficients at their bounds have zero derivative.
We compute the derivatives of the remaining coefficients from their regularized normal equations, assuming this system is nonsingular.
The resulting reduced Jacobian includes the residual-dependent term, and the derivative systems are solved using QR factorization.

We initialize the reconstruction using a coarse spline fit at low frequencies and transfer the result to the finer spline space.
Regularization parameter selection is described in Section~\ref{sec:numerics}.
The method computes a local fit within the prescribed spline class.

\subsection{Numerical experiments}\label{sec:numerics}

We present two sets of experiments with four receivers. The first examines the Volterra--Euler scheme using time-domain waveforms in a medium whose finite-time Green function is known explicitly. The second illustrates regularized spectral reconstruction from discrete frequency-domain data in a known inhomogeneous medium, without imposing \eqref{eq:Green-hypothesis}. We set $T_0=4$, and the receivers are
\[
 (x_1,x_2,x_3,x_4)=\frac{2}{\sqrt3}
 \big((1,1,1),(1,-1,-1),(-1,1,-1),(-1,-1,1)\big).
\]
Both experiments use relative noise levels $0\%,3\%,5\%,10\%$. We measure the reconstruction errors by
\begin{equation}
 E_a=\frac{\|a_{\rm rec}-a\|_{L^2(0,T_0)}}{\|a\|_{L^2(0,T_0)}},\qquad
 E_\lambda=\frac{\|\lambda_{\rm rec}-\lambda\|_{L^2(0,T_0)}}
                       {\|\lambda\|_{L^2(0,T_0)}}.
 \label{eq:numerical-errors}
\end{equation}
For the Volterra--Euler output, the integrals are evaluated by eight-point Gauss quadrature on each time step. For the spectral output, we use discrete norms at 2001 equispaced emission times. Below, $u=s/4$ and $\vartheta=2\pi(u+1/8)$.

\medskip\noindent
\textbf{Volterra--Euler reconstruction with a verified finite-time Green function.}
We consider
\begin{equation}
 f(x)=1+\frac{|x|^2}{4R^2},\qquad
 n(x)=1+\chi(|x|)\bigl(f(x)^{-2}-1\bigr),\qquad R=7.5,
 \label{eq:volterra-numerical-medium}
\end{equation}
where $0\leq\chi\leq1$ is a smooth cutoff equal to one for $|x|\leq4.75$ and zero for $|x|\geq5.25$. The coefficient satisfies $n\geq f(5.25)^{-2}>0.79364$; numerically, $\min n\simeq0.82199$. On the source region $K=\overline B_{0.35}(0)$, $|n-1|\leq1.089\times10^{-3}$. We take $H=6.6$.

Put $d(r)=2R\arctan(r/(2R))$. Since $n\geq f^{-2}$, any influence of the cutoff region on a receiver must travel for at least 
\[
2d(4.75)-d(0.35)-d(2)>6.86208>H.
\]
 Moreover, $T_0+\max_{i,y\in K}\tau_i(y)<6.33821<H<\pi R$. Finite propagation therefore gives the Green function of the spherical metric $f^{-2}\dd x^2$ at all four receivers throughout the observation interval, with no additional singular arrival. With $b_i(y)=[f(x_i)f(y)]^{-1/2}$, its coefficients are
\begin{equation}
\begin{aligned}
 \tau_i(y)&=2R\arcsin\frac{|x_i-y|b_i(y)}{2R},\qquad
 A_i(y)=\frac{b_i(y)}{4\pi R\sin(\tau_i(y)/R)},\\
 V_i(y,t)&=\frac{b_i(y)\tau_i(y)}{32\pi R^3\sin(\tau_i(y)/R)}
 {}_0F_1\!\left(;2;\frac{t^2-\tau_i(y)^2}{16R^2}\right),\qquad t\geq\tau_i(y),
\end{aligned}
\label{eq:volterra-numerical-kernel}
\end{equation}
where ${}_0F_1$ is the hypergeometric limit function. The geometry satisfies $\inf_{y\in K}\sigma_{\min}(Q(y))\geq0.72526$. Thus the finite-time Green representation and rank condition are checked directly for this medium.

The waveforms are generated by inverting the arrival maps and evaluating the history integral with 48-point Gauss quadrature, checked against 96 points. Reconstruction uses $N=1024$ marching steps and waveform samples with maximum spacing $\Delta\leq1/4096$. At each receiver, an independent real Gaussian vector is normalized to the prescribed relative discrete $L^2$ noise level. Each source uses one fixed noise direction across the four levels. These percentages differ from the absolute sample-error bound $\delta$ in Theorem~\ref{thm:march-error}. The march starts from $a_-$; the endpoint $a_+$ determines the acquisition intervals but is not imposed on the reconstructed trajectory.

The test trajectories are a rounded planar Z, an open helix, a spatial Lissajous curve, and a spatial trefoil. Their intensities are periodic, increasing, oscillatory, and double-peaked, respectively. The trajectories and intensities are specified below Figures~\ref{fig:ve-reconstruction}--\ref{fig:ve-trefoil-reconstruction}. Here $\zeta$ is the arc-length-timed Z polygon with vertices $(-0.15,0.12)$, $(0.15,0.12)$, $(-0.15,-0.12)$, $(0.15,-0.12)$, rounded by softplus with width $0.04$ in $u$ and corrected affinely to preserve its endpoints.
\par\smallskip\noindent
\begin{minipage}{\linewidth}
\centering
\includegraphics[width=\linewidth]{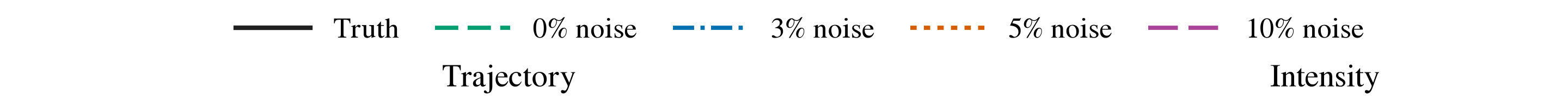}\par
\par\noindent
\begin{minipage}{\linewidth}
\makeatletter
\def\@captype{figure}
\makeatother
\centering
\setlength{\abovecaptionskip}{3pt}
\setlength{\belowcaptionskip}{0pt}
\includegraphics[width=\linewidth]{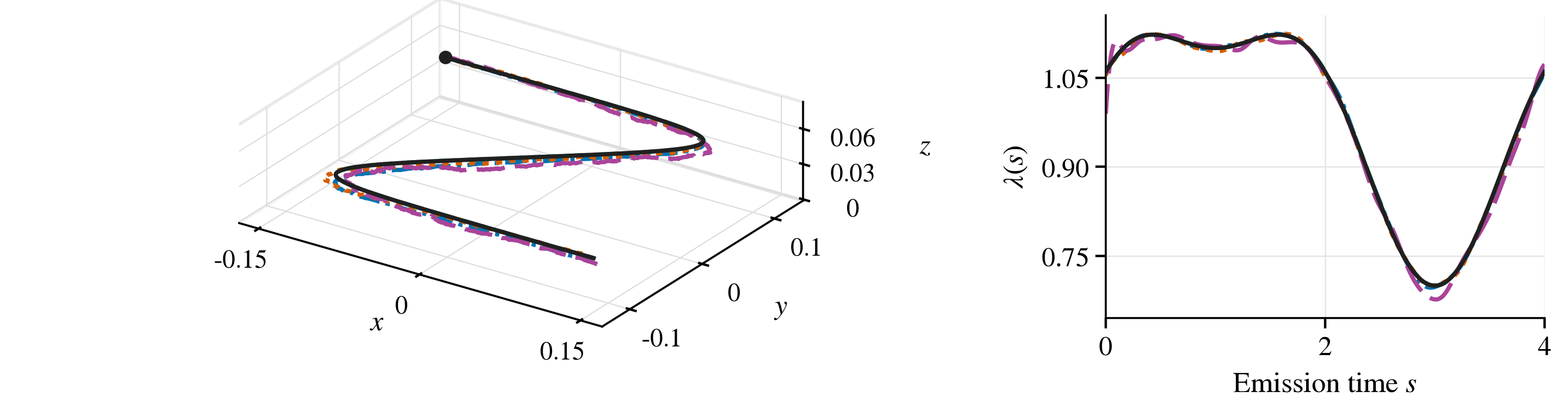}
\caption{Volterra--Euler reconstruction of the rounded-Z source.}
\label{fig:ve-reconstruction}\label{fig:ve-z-reconstruction}
\vspace{1pt}
\noindent\hspace*{0.045\linewidth}%
\begin{minipage}[t][22pt][t]{0.559524\linewidth}
\centering\scriptsize
$\begin{aligned}[t]a_Z(s)&=(\zeta_1(u),\zeta_2(u),0.045),\end{aligned}$\par
\end{minipage}%
\hspace*{0.100714\linewidth}%
\begin{minipage}[t][22pt][t]{0.279762\linewidth}
\centering\scriptsize
\makebox[\linewidth][r]{$\begin{aligned}[t]\lambda_Z(s)&=0.98+0.20\sin(2\pi u)+0.08\cos(4\pi u).\end{aligned}$}\par
\end{minipage}
\end{minipage}
\par\vspace{1pt}
\par\noindent
\begin{minipage}{\linewidth}
\makeatletter
\def\@captype{figure}
\makeatother
\centering
\setlength{\abovecaptionskip}{3pt}
\setlength{\belowcaptionskip}{0pt}
\includegraphics[width=\linewidth]{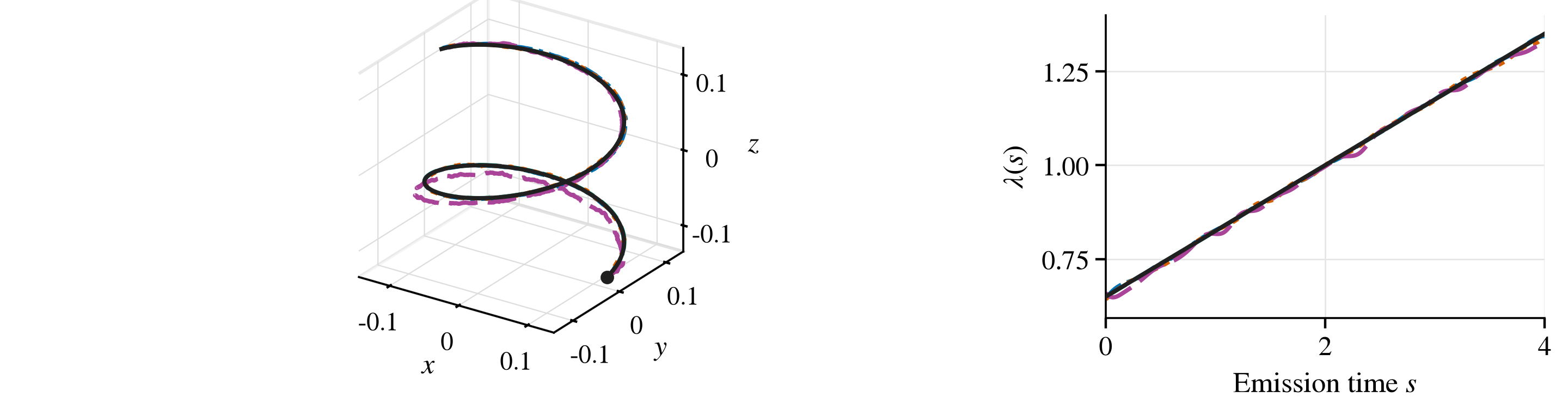}
\caption{Volterra--Euler reconstruction of the open-helix source.}
\label{fig:ve-helix-reconstruction}
\vspace{1pt}
\noindent\hspace*{0.045\linewidth}%
\begin{minipage}[t][22pt][t]{0.559524\linewidth}
\centering\scriptsize
$\begin{aligned}[t]a_H(s)&=(0.12\cos(3\pi u),0.12\sin(3\pi u),-0.12+0.24u),\end{aligned}$\par
\end{minipage}%
\hspace*{0.100714\linewidth}%
\begin{minipage}[t][22pt][t]{0.279762\linewidth}
\centering\scriptsize
\makebox[\linewidth][r]{$\begin{aligned}[t]\lambda_H(s)&=0.65+0.70u.\end{aligned}$}\par
\end{minipage}
\end{minipage}
\par\vspace{1pt}
\par\noindent
\begin{minipage}{\linewidth}
\makeatletter
\def\@captype{figure}
\makeatother
\centering
\setlength{\abovecaptionskip}{3pt}
\setlength{\belowcaptionskip}{0pt}
\includegraphics[width=\linewidth]{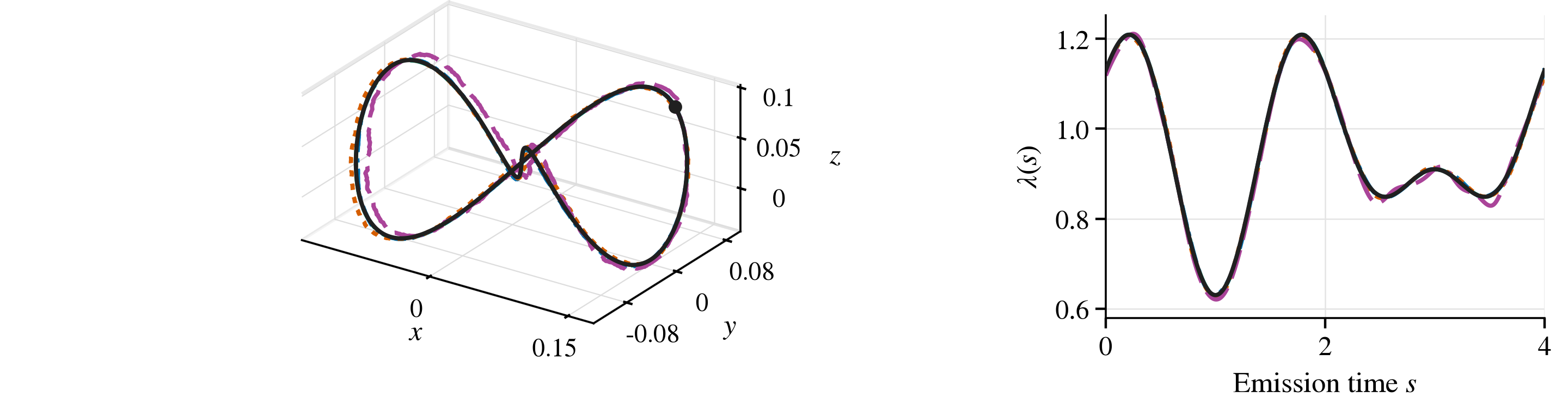}
\caption{Volterra--Euler reconstruction of the spatial Lissajous source.}
\label{fig:ve-lissajous-reconstruction}
\vspace{1pt}
\noindent\hspace*{0.045\linewidth}%
\begin{minipage}[t][22pt][t]{0.559524\linewidth}
\centering\scriptsize
$\begin{aligned}[t]a_L(s)&=(0.02+0.14\sin\vartheta,-0.015+0.10\sin(2\vartheta),0.03+0.06\sin(3\vartheta)),\end{aligned}$\par
\end{minipage}%
\hspace*{0.100714\linewidth}%
\begin{minipage}[t][22pt][t]{0.279762\linewidth}
\centering\scriptsize
\makebox[\linewidth][r]{$\begin{aligned}[t]\lambda_L(s)&=0.95+0.18\cos(4\pi u)+0.14\sin(6\pi u).\end{aligned}$}\par
\end{minipage}
\end{minipage}
\par\vspace{1pt}
\par\noindent
\begin{minipage}{\linewidth}
\makeatletter
\def\@captype{figure}
\makeatother
\centering
\setlength{\abovecaptionskip}{3pt}
\setlength{\belowcaptionskip}{0pt}
\includegraphics[width=\linewidth]{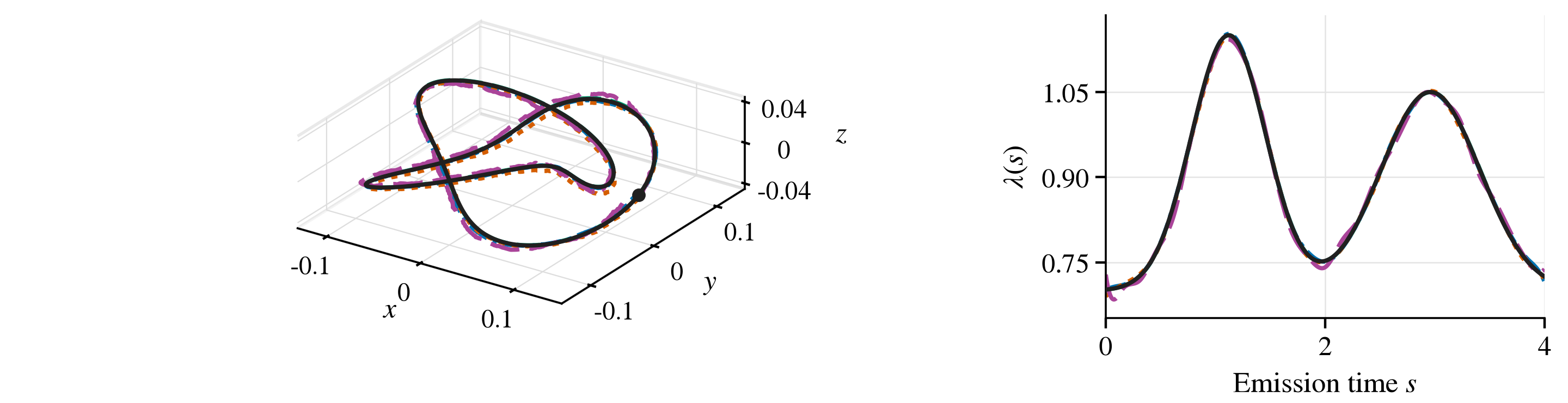}
\caption{Volterra--Euler reconstruction of the spatial trefoil source.}
\label{fig:ve-trefoil-reconstruction}
\vspace{1pt}
\noindent\hspace*{0.045\linewidth}%
\begin{minipage}[t][22pt][t]{0.559524\linewidth}
\centering\scriptsize
$\begin{aligned}[t]a_T(s)&=(r\cos(2\psi),r\sin(2\psi),0.035\sin(3\psi)),\\[-1pt]r&=0.10+0.035\cos(3\psi),\qquad\psi=2\pi u,\end{aligned}$\par
\end{minipage}%
\hspace*{0.100714\linewidth}%
\begin{minipage}[t][22pt][t]{0.279762\linewidth}
\centering\scriptsize
\makebox[\linewidth][r]{$\begin{aligned}[t]\lambda_T(s)&=0.70+0.45e^{-((u-0.28)/0.12)^2}\\[-1pt]&\quad+0.35e^{-((u-0.74)/0.16)^2}.\end{aligned}$}\par
\end{minipage}
\end{minipage}
\par\vspace{1pt}
\end{minipage}
\par\smallskip
These formulas are used only to generate data and evaluate errors. Let $w_{ci}^{\rm dir}$ and $w_{ci}^{\rm tail}$ denote the direct and history terms in \eqref{eq:Volterra-identity} for source $c$ and receiver $i$, on their source-arrival interval $I_{ci}$. The noiseless data satisfy
\[
 \left(\frac{\sum_{c=1}^4\sum_{i=1}^4\|w_{ci}^{\rm tail}\|_{L^2(I_{ci})}^2}
 {\sum_{c=1}^4\sum_{i=1}^4\|w_{ci}^{\rm dir}\|_{L^2(I_{ci})}^2}\right)^{1/2}
 =0.9873\%.
\]
This is a signal norm ratio, rather than a bound on the medium perturbation.

Figures~\ref{fig:ve-reconstruction}--\ref{fig:ve-trefoil-reconstruction} show the reconstructed trajectories on the left and intensities on the right. For readability, the displayed intensities are projected onto a fixed cubic spline space with 29 controls. The trajectories and all reported errors use the raw Volterra--Euler output. All 16 marches complete without a failed safeguard. At $10\%$ noise, the trajectory errors range from $5.06\%$ to $7.63\%$, and the intensity errors from $4.44\%$ to $4.88\%$; see Table~\ref{tab:volterra-reconstruction}.

\begin{table}[!htbp]
\centering\small
\caption{Relative Volterra--Euler reconstruction errors in percent, computed from the raw piecewise linear trajectory and piecewise constant intensity. Column headings give the relative time-domain noise level at each receiver.}
\label{tab:volterra-reconstruction}
\begin{tabular}{lcrrrr}
\toprule
Test source & Error & $0\%$ & $3\%$ & $5\%$ & $10\%$\\
\midrule
Rounded Z & $E_a$ & 0.283 & 2.978 & 4.096 & 5.977\\
 & $E_\lambda$ & 0.063 & 1.304 & 2.132 & 4.441\\
\addlinespace[2pt]
Open helix & $E_a$ & 0.399 & 1.660 & 1.875 & 7.633\\
 & $E_\lambda$ & 0.039 & 1.381 & 2.298 & 4.723\\
\addlinespace[2pt]
Spatial Lissajous & $E_a$ & 0.608 & 1.170 & 2.478 & 6.511\\
 & $E_\lambda$ & 0.151 & 1.447 & 2.302 & 4.610\\
\addlinespace[2pt]
Spatial trefoil & $E_a$ & 0.790 & 1.965 & 3.629 & 5.064\\
 & $E_\lambda$ & 0.100 & 1.455 & 2.388 & 4.881\\
\bottomrule
\end{tabular}
\end{table}

\medskip\noindent
\textbf{Spectral reconstruction in a general known inhomogeneous medium.}
We next consider $n=1+q$, where
\begin{equation}
 q(x)=0.50B_{0.50}(x-(0.85,0.15,0.10))
      +0.35B_{0.40}(x-(-0.75,-0.25,0.15)),
 \label{eq:numerical-medium}
\end{equation}
and $B_R(x)=\exp(1-(1-|x|^2/R^2)^{-1})$ for $|x|<R$, with $B_R=0$ otherwise. The supports are disjoint and $1\leq n\leq1.5$. Candidate trajectories lie in $[-0.22,0.22]^3$, outside $\supp q$. The outgoing Green function is computed from the Lippmann--Schwinger equation~\cite{CK2019}, using different volume grids for data generation and inversion.

We fit 32 frequencies $k=0.4+0.4j$, $j=0,\ldots,31$, and use eight selection frequencies $k=0.6+1.6j$ and eight final-check frequencies $k=1.4+1.6j$, $j=0,\ldots,7$. The data are generated directly from the complete receiver spectra in \eqref{eq:spectral-forward}. Final-check data do not enter parameter selection. At each receiver, independent complex Gaussian noise is normalized separately on the 32 fitting frequencies and the original 16 held-out frequencies. Each source uses one fixed noise direction across levels. Relative noise is exact on these two blocks and their combination; the levels on the eight-frequency subsets may differ.

The reconstruction uses 16 trajectory and 29 intensity cubic controls, with low-frequency initialization and continuation as in Section~\ref{sec:batch}. The two regularization parameters are selected independently using the noise-dependent fitting-residual threshold and selection data, with an intensity regularization floor for noiseless fits. These heuristic rules were refined on the fixed noise realizations reported here. The candidate ranges, refinement rules, local linearized GCV criterion, selected parameters, and fitting and final-check residuals are given in the accompanying supplementary material.

We use a planar ellipse, an open helix, a planar figure-eight curve, and a spatial Lissajous curve, with the trajectories and intensities specified below Figures~\ref{fig:ellipse-reconstruction}--\ref{fig:spatial-reconstruction}. The formulas are used only for data generation and error evaluation. The figure-eight and spatial Lissajous paths intersect themselves at $s=1.5$ and $s=3.5$.

\par\smallskip\noindent
\begin{minipage}{\linewidth}
\centering
\includegraphics[width=\linewidth]{v5_reconstruction_header.png}\par
\par\noindent
\begin{minipage}{\linewidth}
\makeatletter
\def\@captype{figure}
\makeatother
\centering
\setlength{\abovecaptionskip}{3pt}
\setlength{\belowcaptionskip}{0pt}
\includegraphics[width=\linewidth]{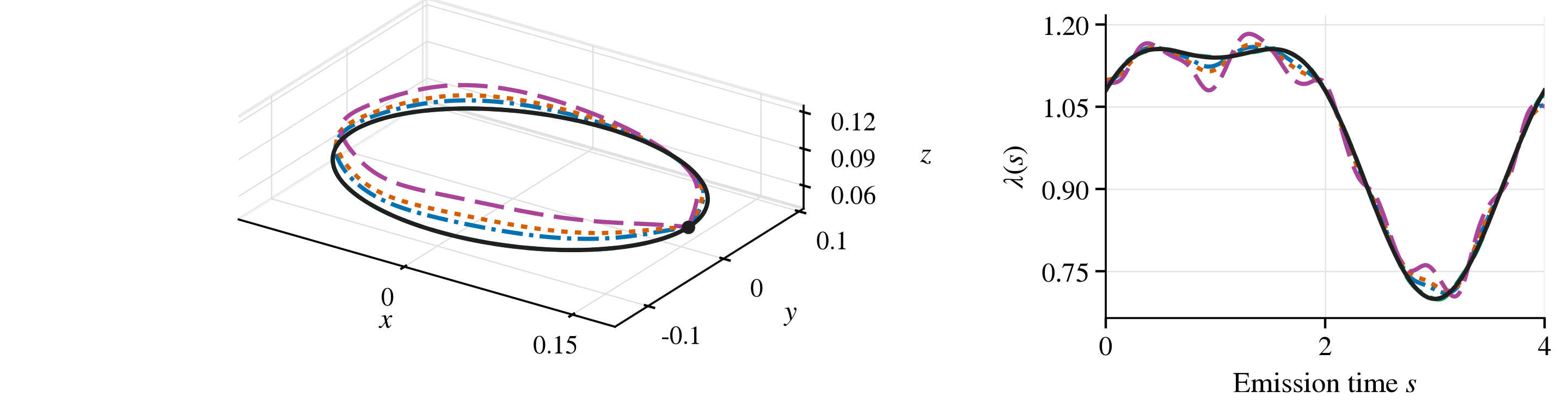}
\caption{Spectral reconstruction of the ellipse source.}
\label{fig:noisy-reconstruction}\label{fig:ellipse-reconstruction}
\vspace{1pt}
\noindent\hspace*{0.045\linewidth}%
\begin{minipage}[t][22pt][t]{0.559524\linewidth}
\centering\scriptsize
$\begin{aligned}[t]a_1(s)&=(0.02+0.15\cos(2\pi u),-0.02+0.11\sin(2\pi u),0.07),\end{aligned}$\par
\end{minipage}%
\hspace*{0.100714\linewidth}%
\begin{minipage}[t][22pt][t]{0.279762\linewidth}
\centering\scriptsize
\makebox[\linewidth][r]{$\begin{aligned}[t]\lambda_1(s)&=1+0.22\sin(2\pi u)+0.08\cos(4\pi u).\end{aligned}$}\par
\end{minipage}
\end{minipage}
\par\vspace{1pt}
\par\noindent
\begin{minipage}{\linewidth}
\makeatletter
\def\@captype{figure}
\makeatother
\centering
\setlength{\abovecaptionskip}{3pt}
\setlength{\belowcaptionskip}{0pt}
\includegraphics[width=\linewidth]{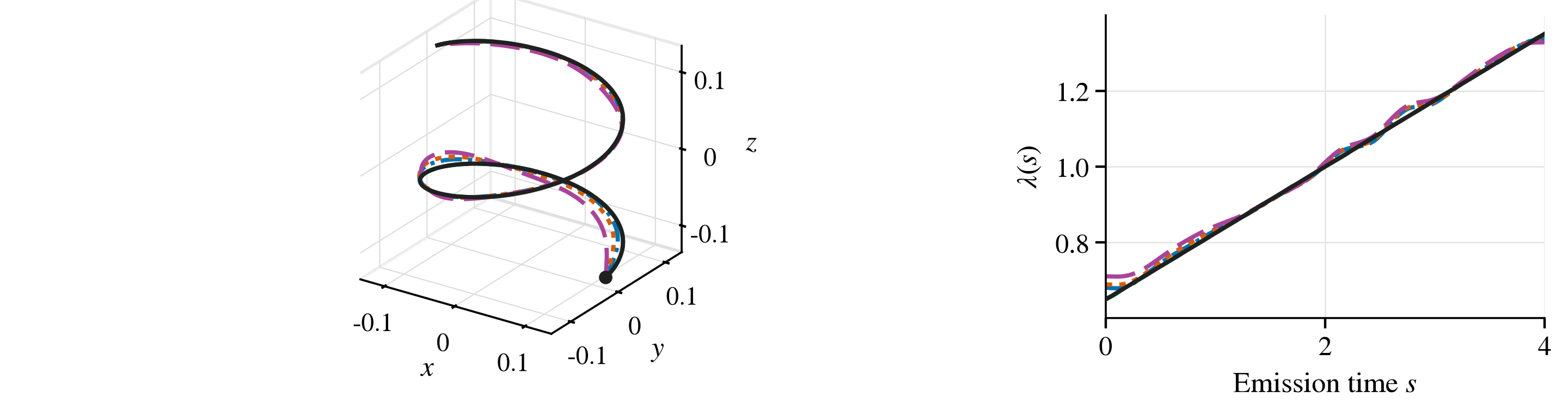}
\caption{Spectral reconstruction of the open-helix source.}
\label{fig:helix-reconstruction}
\vspace{1pt}
\noindent\hspace*{0.045\linewidth}%
\begin{minipage}[t][22pt][t]{0.559524\linewidth}
\centering\scriptsize
$\begin{aligned}[t]a_2(s)&=(0.12\cos(3\pi u),0.12\sin(3\pi u),-0.12+0.24u),\end{aligned}$\par
\end{minipage}%
\hspace*{0.100714\linewidth}%
\begin{minipage}[t][22pt][t]{0.279762\linewidth}
\centering\scriptsize
\makebox[\linewidth][r]{$\begin{aligned}[t]\lambda_2(s)&=0.65+0.70u.\end{aligned}$}\par
\end{minipage}
\end{minipage}
\par\vspace{1pt}
\par\noindent
\begin{minipage}{\linewidth}
\makeatletter
\def\@captype{figure}
\makeatother
\centering
\setlength{\abovecaptionskip}{3pt}
\setlength{\belowcaptionskip}{0pt}
\includegraphics[width=\linewidth]{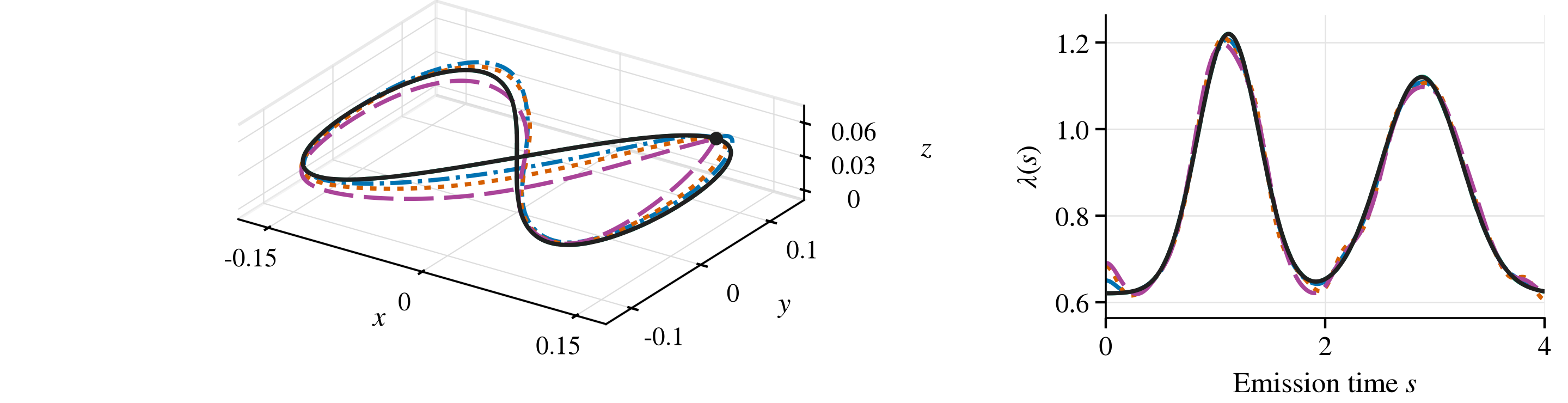}
\caption{Spectral reconstruction of the figure-eight source.}
\label{fig:figure-eight-reconstruction}
\vspace{1pt}
\noindent\hspace*{0.045\linewidth}%
\begin{minipage}[t][22pt][t]{0.559524\linewidth}
\centering\scriptsize
$\begin{aligned}[t]a_3(s)&=(0.16\sin\vartheta,0.12\sin(2\vartheta),0.04),\end{aligned}$\par
\end{minipage}%
\hspace*{0.100714\linewidth}%
\begin{minipage}[t][22pt][t]{0.279762\linewidth}
\centering\scriptsize
\makebox[\linewidth][r]{$\begin{aligned}[t]\lambda_3(s)&=0.62+0.60e^{-((u-0.28)/0.10)^2}\\[-1pt]&\quad+0.50e^{-((u-0.72)/0.13)^2}.\end{aligned}$}\par
\end{minipage}
\end{minipage}
\par\vspace{1pt}
\par\noindent
\begin{minipage}{\linewidth}
\makeatletter
\def\@captype{figure}
\makeatother
\centering
\setlength{\abovecaptionskip}{3pt}
\setlength{\belowcaptionskip}{0pt}
\includegraphics[width=\linewidth]{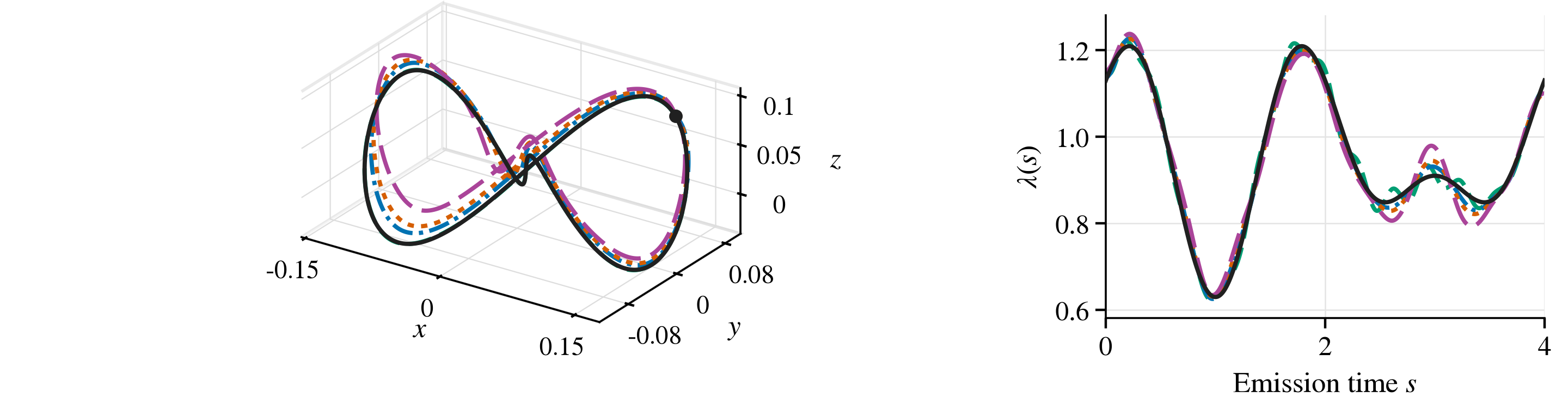}
\caption{Spectral reconstruction of the spatial Lissajous source.}
\label{fig:spatial-reconstruction}
\vspace{1pt}
\noindent\hspace*{0.045\linewidth}%
\begin{minipage}[t][22pt][t]{0.559524\linewidth}
\centering\scriptsize
$\begin{aligned}[t]a_4(s)&=(0.02+0.14\sin\vartheta,-0.015+0.10\sin(2\vartheta),0.03+0.06\sin(3\vartheta)),\end{aligned}$\par
\end{minipage}%
\hspace*{0.100714\linewidth}%
\begin{minipage}[t][22pt][t]{0.279762\linewidth}
\centering\scriptsize
\makebox[\linewidth][r]{$\begin{aligned}[t]\lambda_4(s)&=0.95+0.18\cos(4\pi u)+0.14\sin(6\pi u).\end{aligned}$}\par
\end{minipage}
\end{minipage}
\par\vspace{1pt}
\end{minipage}
\par\smallskip

Table~\ref{tab:reconstruction} lists the relative errors evaluated at 2001 equispaced emission times.
All 16 fits meet the stopping criteria and residual threshold. At $10\%$ noise, the relative trajectory errors range from $11.78\%$ to $19.83\%$, while the intensity errors range from $1.78\%$ to $3.09\%$. Compared with the coupled-parameter baseline on the same observations, independent parameter selection reduces trajectory errors in ten of the twelve noisy cases and intensity errors in all twelve; the ellipse has slightly larger trajectory errors at $3\%$ and $5\%$ noise.

\begin{table}[!htbp]
 \centering\small
\caption{Relative four-receiver reconstruction errors in percent, evaluated at 2001 equispaced emission times. Column headings give exact relative noise in the fitting data.}
 \label{tab:reconstruction}
 \begin{tabular}{lcrrrr}
 \toprule
 Test source & Error & $0\%$ & $3\%$ & $5\%$ & $10\%$\\
 \midrule
 Ellipse & $E_a$ & 0.032 & 4.801 & 7.513 & 13.161\\
  & $E_\lambda$ & 0.132 & 0.852 & 1.249 & 2.520\\
 \addlinespace[2pt]
 Open helix & $E_a$ & 0.046 & 3.951 & 6.685 & 13.177\\
  & $E_\lambda$ & 0.052 & 0.807 & 1.065 & 1.778\\
 \addlinespace[2pt]
 Figure-eight & $E_a$ & 0.069 & 6.717 & 7.410 & 11.778\\
  & $E_\lambda$ & 0.082 & 0.877 & 1.905 & 2.329\\
 \addlinespace[2pt]
 Spatial Lissajous & $E_a$ & 0.268 & 6.679 & 10.530 & 19.834\\
  & $E_\lambda$ & 1.406 & 1.186 & 1.673 & 3.087\\
 \bottomrule
 \end{tabular}
\end{table}

The numerical results demonstrate the effectiveness of both reconstruction methods for the simultaneous recovery of source trajectories and intensities from noisy measurements at four fixed receivers in known inhomogeneous media.

\appendix
\makeatletter
\renewcommand{\@seccntformat}[1]{\ifcsname appendix@#1@prefix\endcsname\csname appendix@#1@prefix\endcsname\fi\csname the#1\endcsname\quad}
\newcommand{\appendix@section@prefix}{\appendixname~}
\makeatother

\section{Well-posedness}\label{sec:direct}
\subsection{Causal solutions with point sources}

Write $H^s=H^s(\R^3)$ for the usual $L^2$-based Sobolev spaces.
Every finite Radon measure $\mu$ satisfies
\begin{equation}
\|\mu\|_{H^{-2}}\leq C\|\mu\|_{\M},
\label{eq:measure-sobolev}
\end{equation}
where $\|\mu\|_{\M}$ denotes the total variation norm.
Indeed, $|\widehat\mu(\xi)|\leq C\|\mu\|_{\M}$ and $\int_{\R^3}(1+|\xi|^2)^{-2}\dd\xi<\infty$. In particular, $\|\delta_y\|_{H^{-2}}$ is independent of $y$.
Under \eqref{eq:admissible}, the moving point source therefore belongs to $L^1(\R;H^{-2})$ and acts on test functions by
\begin{equation}
\langle F,\varphi\rangle
=\int_0^{T_0}\lambda(s)\varphi(a(s),s)\dd s.
\label{eq:source-pairing}
\end{equation}
The same conclusion holds for finite sums of such sources.

\begin{proposition}\label{prop:direct}
Under \eqref{eq:medium}, every $F\in L^1_{\loc}(\R;H^{-2})$ supported in $t\geq0$ has a unique causal distributional solution of \eqref{eq:wave}, satisfying
\begin{equation}
u\in C(\R;H^{-1}),\qquad
\partial_tu\in C(\R;H^{-2}).
\label{eq:negative-energy}
\end{equation}
For a compactly supported source, the solution has finite propagation speed with respect to $g$.
\end{proposition}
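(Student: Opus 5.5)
We will treat \eqref{eq:wave} as a second-order abstract Cauchy problem on the negative Sobolev scale, using the time-independence and uniform positivity of $n$. We first work in the weighted space $L^2_n=L^2(\R^3;n\,\dd x)$. On this space the operator $\mathcal A=-n^{-1}\Delta$, with domain $H^2$, is self-adjoint and nonnegative, since $\langle \mathcal A f,h\rangle_{L^2_n}=\int\nabla f\cdot\overline{\nabla h}$. Because $n_0\le n\le n_1$ and $q$ is smooth with compact support, the graph norms of $(1+\mathcal A)^{s/2}$ are equivalent to the $H^s$ norms for $s\in[-2,2]$. In particular, $(1+\mathcal A)$ is an isomorphism $H^{s}\to H^{s-2}$ in this range; for negative $s$ this follows by duality with respect to the $L^2_n$ pairing. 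We then write $n^{-1}F$ in place of $F$; multiplication by $n^{-1}\in C^\infty$, which is constant outside a compact set, preserves $H^{-2}$.

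\textbf{Existence.} We define the solution by the Duhamel formula
\[
u(t)=\int_0^t \frac{\sin((t-s)\sqrt{\mathcal A})}{\sqrt{\mathcal A}}\,n^{-1}F(s)\dd s,
\]
with $u=0$ for $t<0$, understood on the extended scale $\mathcal H^{-2}=(1+\mathcal A)\,L^2_n\simeq H^{-2}$. The operator $S(\tau)=\sin(\tau\sqrt{\mathcal A})/\sqrt{\mathcal A}$ maps $\mathcal H^{-2}\to\mathcal H^{-1}$ with norm $\le C(1+|\tau|)$, and $\cos(\tau\sqrt{\mathcal A})$ is bounded on $\mathcal H^{-2}$. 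This follows from the spectral theorem, since $\sup_\lambda (1+\lambda)^{1/2}|\sin(\tau\sqrt\lambda)|/\sqrt\lambda\le C(1+|\tau|)$. For $F\in L^1_{\loc}(\R;H^{-2})$, dominated convergence together with the strong continuity of these groups gives $u\in C(\R;H^{-1})$ and $\partial_tu=\int_0^t\cos((t-s)\sqrt{\mathcal A})n^{-1}F(s)\dd s\in C(\R;H^{-2})$. Both vanish for $t\le0$ because $F$ is supported in $t\ge0$. To verify the equation distributionally, we first approximate $F$ by $F_k\in C_c(\R;H^{2})$ supported in $t\ge0$. For these, $u_k$ is a classical solution of $n\partial_t^2u_k-\Delta u_k=F_k$. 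We then pass to the limit: the map $F\mapsto u$ is continuous from $L^1((-\infty,T);H^{-2})$ into $C((-\infty,T];H^{-1})$.

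\textbf{Uniqueness.} Let $v$ be a causal distributional solution with $F=0$ and the regularity \eqref{eq:negative-energy}. We apply $(1+\mathcal A)^{-1}$, which commutes with $\partial_t^2+\mathcal A$. The function $\tilde v=(1+\mathcal A)^{-1}v$ then lies in $C(\R;H^1)$ with $\partial_t\tilde v\in C(\R;L^2)$, solves the homogeneous equation in the energy class, and vanishes for $t<0$. The standard energy identity $\frac{\dd}{\dd t}\big(\|\partial_t\tilde v\|_{L^2_n}^2+\|\nabla\tilde v\|^2\big)=0$, justified after one more time-mollification, gives $\tilde v\equiv0$, hence $v\equiv0$.

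\textbf{Finite propagation speed.} Suppose $\supp F\subset K\times[0,\infty)$ with $K$ compact. We claim that $u(x,t)=0$ whenever $d_g(x,K)>t$. We would first prove this for the regularized sources $F_k$, using the local energy estimate on the backward characteristic cones $\{(y,s):d_g(y,x)<t-s\}$. The weight function $\phi(y,s)=d_g(y,x)+s$ satisfies $n^{-1}|\nabla\phi|^2\le 1=|\partial_s\phi|^2$ almost everywhere, so the relevant level sets are weakly spacelike. Since $d_g$ is only Lipschitz, we would replace it by a smooth subsolution $\phi_\epsilon$ with $n^{-1}|\nabla\phi_\epsilon|^2\le(1+\epsilon)^{-2}$, as in the standard Riemannian domain-of-dependence argument. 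The spatial supports of $F_k$ can be chosen in any neighborhood of $K$, so the conclusion passes to the limit $k\to\infty$.

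\textbf{Main obstacle.} We expect the main difficulty to be this last step, the sharp propagation speed measured in $d_g$ rather than a crude Euclidean bound. It requires the regularization of the nonsmooth distance function together with the energy flux computation across a non-characteristic surface. The remaining steps are routine spectral calculus.
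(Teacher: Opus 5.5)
Your existence, regularity and propagation arguments follow the paper's proof. You use the same self-adjoint realization of $-n^{-1}\Delta$ on $L^2(n\,\dd x)$, with its scale identified with $H^s$. You use the same Duhamel formula and spectral bound for the sine propagator $H^{-2}\to H^{-1}$, and the cosine propagator for $\partial_tu$. The equation is verified by density of regular sources in $L^1(0,L;H^{-2})$. Finite propagation comes from the local energy estimate plus approximation, which you spell out in more detail than the paper does.

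The one genuinely different step is uniqueness. You lift a homogeneous solution by $(1+\mathcal A)^{-1}$ into the energy class and use energy conservation. This stays inside your spectral framework, but it needs the a priori regularity \eqref{eq:negative-energy}, so it only gives uniqueness within that class. The paper argues instead by duality with smooth backward solutions:
\begin{itemize}
\item for $\varphi\in C_c^\infty(\R^3\times\R)$, solve $P\psi=\varphi$ with $\psi=0$ for large $t$ ($P$ is formally self-adjoint since $n$ is independent of $t$);
\item finite propagation makes $\supp\psi\cap\{t\ge0\}$ compact;
\item hence $\langle v,\varphi\rangle=\langle v,P\psi\rangle=\langle Pv,\psi\rangle=0$ for every distribution $v$ supported in $t\ge0$ with $Pv=0$.
\end{itemize}
This gives uniqueness among all causal distributional solutions, with no a priori regularity. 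That is the reading the paper intends. It is also what the paper later invokes as ``causal uniqueness'' in Step~3 of the proof of Proposition~\ref{prop:small-medium}; with your version one would first have to check that $\chi K_N+W$ (or its difference from $K_{\mathcal Q}$) lies in your class.

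If you prefer to stay with energy methods, you can reach the stronger conclusion another way. Mollify an arbitrary causal solution in time, and bootstrap its regularity by interior elliptic regularity as in the proof of Lemma~\ref{lem:smoothing}, starting from its local finite order. Then apply your domain-of-dependence estimate to the resulting smooth solution, which vanishes for $t<-\eps$.

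Finally, state finite propagation in the domain-of-influence form the paper uses: a source at $(y,s)$ affects $(x,t)$ only if $t\ge s+d_g(x,y)$. Do not restrict it to sources supported in $K\times[0,\infty)$. Step~3 of Proposition~\ref{prop:small-medium} applies it to a forcing supported in $\{t\ge\varrho\}$, and your backward-cone argument already proves this form.
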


\begin{proof}
The operator $A=-n^{-1}\Delta$ is nonnegative and self-adjoint on $L^2(\R^3,n\dd x)$, with domain $H^2$.
It is associated with the closed quadratic form $\int_{\R^3}|\nabla v|^2\dd x$ on $H^1$~\cite[Chapter~VI]{Kato1995}.
Elliptic regularity, interpolation, and duality~\cite[Chapters~4--6]{Taylor2011} identify the scale generated by $I+A$ with the usual Sobolev spaces at the orders used below, with equivalent norms.
Multiplication by $n^{\pm1}$ is bounded on these spaces.

For $t\geq0$, the Duhamel formula is
\begin{equation}
u(t)=\int_0^t
\frac{\sin((t-s)\sqrt A)}{\sqrt A}\,n^{-1}F(s)\dd s.
\label{eq:duhamel}
\end{equation}
At spectral value zero, the quotient is defined by continuity as $t-s$.
The spectral theorem~\cite[Appendix~A]{Taylor2011} gives a uniformly bounded map $H^{-2}\to H^{-1}$ for $0\leq t-s\leq L$, since
\[
\sup_{\sigma\geq0}
(1+\sigma)^{1/2}
\left|\frac{\sin(\theta\sqrt\sigma)}{\sqrt\sigma}\right|
\leq C_L,\qquad 0\leq\theta\leq L.
\]
The time derivative is given by the cosine propagator acting on $H^{-2}$.
Approximation of $F$ in $L^1(0,L;H^{-2})$ proves \eqref{eq:negative-energy}, the equation, and the zero initial values.
Uniqueness among causal distributional solutions follows by duality with smooth backward solutions.

The local energy estimate~\cite[Chapter~6]{Taylor2011} and approximation give finite propagation for distributional data.
The propagation metric of $\partial_t^2-n^{-1}\Delta$ is $g=n\dd x^2$, so a source at $(y,s)$ can influence $(x,t)$ only if $t\geq s+d_g(x,y)$.
\end{proof}

\subsection{Distributional pressure traces}

For sources with fixed compact spatial support $K\Subset\Omega$, choose a collar $N$ of $\Gamma$ such that $\overline N\cap K=\varnothing$.
For $\eta\in C_c^\infty(\R)$, set
\begin{equation}
U_\eta=\int_\R u(t)\eta(t)\dd t\in H^{-1}(\R^3).
\label{eq:time-pairing}
\end{equation}
Since $F=0$ on $N\times\R$, the wave equation gives
\begin{equation}
\Delta U_\eta=nU_{\eta''}\quad\text{on }N.
\label{eq:paired-elliptic}
\end{equation}
All derivatives of $\eta$ have the same compact time support.
Applying interior elliptic regularity~\cite[Chapters~4 and 5]{Taylor2011} to \eqref{eq:paired-elliptic} for $\eta$ and its derivatives gives successively $H^1_{\loc}(N)$, $H^3_{\loc}(N)$, and higher regularity.
Sobolev embedding then yields $U_\eta\in C^\infty(N)$.
On each fixed compact time support, the interior estimates bound the smooth seminorms of $U_\eta$ near $\Gamma$ by finitely many test-function seminorms of $\eta$.

For $\psi\in C^\infty(\Gamma)$, define
\begin{equation}
\langle u|_{\Gamma\times\R},\psi(x)\eta(t)\rangle
:=\int_\Gamma\psi(x)U_\eta(x)\dd S_x.
\label{eq:weak-trace}
\end{equation}
The preceding bounds and the Schwartz kernel theorem~\cite[Chapter~5]{Hormander2003} extend this pairing uniquely to a distribution on $\Gamma\times\R$.
By \eqref{eq:time-pairing}, temporal convolution commutes with this trace:
\begin{equation}
(\rho_\eps*_t u)|_\Gamma
=\rho_\eps*_t(u|_\Gamma).
\label{eq:trace-commute}
\end{equation}
For sources supported in $D_0\times[0,T_0]$, finite propagation gives $u|_\Gamma=0$ for $t<d_g(D_0,\Gamma)$, where $d_g(D_0,\Gamma)>0$.
Thus equality of two traces on $\Gamma\times(0,T)$ implies equality on $\Gamma\times(-\infty,T)$.

\section{Green function representation}\label{app:Green}

\begin{lemma}[Source-parametrized Hadamard construction]
\label{lem:parametric-Hadamard}
Fix $\widehat H>0$, and let $Y$ be a bounded open neighborhood of the compact set $D$.
Let $g=n_\eps\dd x^2$ and $\mathcal Q=\partial_t^2+L$, with $L=-\Delta_g+V$, be as in \eqref{eq:conjugation}.
Suppose that, for every $y$ near $\overline Y$, the exponential map $\exp_y$ maps $\{|v|_{g(y)}<2\widehat H\}$ diffeomorphically onto $B_g(y,2\widehat H)$.
Then the coefficients in \eqref{eq:finite-parametrix} are jointly smooth on 
\[
\mathcal N=\{(x,y):y\in Y,\ \varrho=d_g(x,y)<2\widehat H\},
\]
 independent of $t$, and satisfy $U_0>0$.
For each $r<2\widehat H$, their derivatives of any fixed finite order are uniformly bounded for $y\in\overline Y$ and $\varrho\leq r$.
Let $F_N$ be the regular residual multiplied by the fixed spatial and temporal cutoffs in Step~3 below.
For each integer $m\geq0$, $F_N$ is jointly $C^m$ in $(x,y,t)$ if $N\geq m+2$.
For $m\geq4$, the causal solution of $\mathcal Q W=-F_N$ is jointly $C^1$ for $0\leq t\leq\widehat H$, with bounds uniform in $y\in\overline Y$.
\end{lemma}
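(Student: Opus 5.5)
The plan is to follow the classical Hadamard construction for the conjugated operator $\mathcal Q=\partial_t^2-\Delta_g+V$, keeping track of the source point $y$ as a parameter throughout. The parametrix in \eqref{eq:finite-parametrix} is presumably of the form
\[
\sum_{k=0}^{N}U_k(x,y)\,\frac{(t^2-\varrho^2)_+^{k-1}}{\Gamma(k)}\quad(\text{with the }k=0\text{ term read as }\delta(t^2-\varrho^2)),
\]
and the argument proceeds in three steps.

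\textbf{Step 1: the phase and the transport equations.} On $\mathcal N$, the injectivity assumption on $\exp_y$ makes $(x,y)\mapsto\exp_y^{-1}(x)$ a jointly smooth diffeomorphism. Indeed, the map $(y,v)\mapsto(y,\exp_y v)$ is a smooth local diffeomorphism by the inverse function theorem, since its differential is nondegenerate inside the injectivity domain. Being injective, it is a diffeomorphism onto its image. Hence $\varrho^2=|\exp_y^{-1}x|_{g(y)}^2$ is jointly smooth, and it solves the eikonal equation $|\nabla_x\varrho^2|_g^2=4\varrho^2$.

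Inserting the ansatz into $\mathcal Q$ yields the recursive transport ODEs along radial geodesics. The leading coefficient is
\[
U_0=\Theta(x,y)^{-1/2},
\]
where $\Theta$ is the Jacobian of $\exp_y$ in normal coordinates; it is positive and jointly smooth, so $U_0>0$. The higher coefficients are given by the usual integral formula
\[
U_k(x,y)=\int_0^1 s^{k-1}\,(U_0^{-1}L_xU_{k-1})(\exp_y(s\,\exp_y^{-1}x),y)\,ds,
\]
up to normalization. Since the integrand is smooth in $(s,x,y)$, differentiation under the integral gives joint smoothness. On the compact set $\{y\in\overline Y,\ \varrho\le r\}$ with $r<2\widehat H$, this also gives uniform bounds on derivatives of any finite order. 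None of the coefficients depends on $t$, because $\mathcal Q$ has time-independent coefficients.

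\textbf{Step 2: the residual.} The truncated sum satisfies
\[
\mathcal Q(\text{sum})=\delta+\frac{(L_xU_N)\,(t^2-\varrho^2)_+^{N-1}}{\Gamma(N)}.
\]
Cutoffs are then applied: a spatial cutoff equal to one where $\varrho\le r$ and supported in $\mathcal N$, and a temporal cutoff on $[0,\widehat H]$. The commutator terms with the spatial cutoff are supported where $\varrho$ is bounded away from zero. Both the commutators and the main residual are therefore of the form smooth function times $(t^2-\varrho^2)_+^{j}$ with $j\ge N-1$, plus commutator terms that are also regular there.

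The function $(t^2-\varrho^2)_+^{N-1}$ is $C^{N-2}$ jointly in $(x,y,t)$, because $\varrho^2$ is smooth. This gives $F_N\in C^m$ whenever $N\ge m+2$. The commutator pieces do need checking: derivatives of the cutoff times the singular terms $\delta(t^2-\varrho^2)$ or low powers. One option is to choose the spatial cutoff $\equiv1$ on $\varrho\le\widehat H+\epsilon$; by finite speed, the low-order singular terms for $t\le\widehat H$ then live where the cutoff is constant. The alternative is to absorb those pieces into the exact parametrix.

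\textbf{Step 3: the correction $W$.} Solve $\mathcal QW=-F_N$ with zero Cauchy data by the Duhamel formula \eqref{eq:duhamel}. Since $F_N\in C^m$ with $m\ge4$ and is compactly supported in $x$ uniformly in $y$, the following holds uniformly in $y$:
\[
F_N,\ \partial_tF_N\in L^1(0,\widehat H;H^{2}).
\]
Energy estimates then give $W\in C^1([0,\widehat H];H^2)$, and more, uniformly in $y$. Sobolev embedding $H^2\subset C^0$ in three dimensions gives continuity of $W$ and $\partial_tW$. For $\nabla_xW$, one more spatial derivative is needed, i.e. $H^3$, which is available from $m\ge4$ by applying the estimate to $L$-powers.

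For joint regularity in $y$: $\partial_yW$ solves the same equation with source $-\partial_yF_N\in C^{m-1}$. The same estimate, applied with $m-1\ge3$ derivatives, gives $\partial_yW\in C([0,\widehat H];H^2)\subset C^0$. Uniform bounds follow from the uniform bounds on $F_N$.

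\textbf{Main obstacle.} The delicate step is the derivative bookkeeping in Step 3. One must verify that $m\ge4$ yields enough Sobolev regularity for continuity of the first derivatives in $x$, $y$ and $t$ jointly, and that the cutoff commutators in Step 2 do not destroy the $C^m$ regularity.
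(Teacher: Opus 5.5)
You have a genuine gap in Step 2, at the vertex of the light cone. You guessed the parametrix with two-sided cones, $\delta(t^2-\varrho^2)$ and $(t^2-\varrho^2)_+^{k-1}$. You then got regularity of the residual by writing $(t^2-\varrho^2)_+^{N-1}$ as the $C^{N-2}$ function $s\mapsto s_+^{N-1}$ composed with the smooth function $t^2-\varrho^2$.

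However, \eqref{eq:finite-parametrix} is the forward parametrix, built from $\delta(t-\varrho)$ and $\one_{\{t\ge\varrho\}}$, and it has to be. The distribution $\delta(t^2-\varrho^2)$ also contains the advanced term $\delta(t+\varrho)$, and the later identification $K_{\mathcal Q}=\chi K_N+W$ by causal uniqueness requires $\chi K_N$ to be supported in $t\ge\varrho$. So the regular residual is $b_N\one_{\{t\ge\varrho\}}(t^2-\varrho^2)^{N-1}=b_N\one_{\{t\ge0\}}(t^2-\varrho^2)_+^{N-1}$.

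Away from the vertex your argument is fine, because this factor vanishes identically near any point with $t=0$, $\varrho>0$. At the vertex $x=y$, $t=0$, the truncation $\one_{\{t\ge0\}}$ cannot be absorbed into a composition with a smooth function, so the composition argument gives nothing there. That point is exactly where the threshold $N\ge m+2$ has to be earned.

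The missing step is the homogeneity argument the paper uses. In the jointly smooth coordinates $w=n_\eps(y)^{1/2}v(x,y)$ one has $\varrho=|w|$. The cone factor $\one_{\{t\ge|w|\}}(t^2-|w|^2)^{p}$, with $p=N-1$, is $C^{p-1}$ away from $(t,w)=(0,0)$ and positively homogeneous of degree $2p$. Hence its derivatives of order $k\le m\le p-1$ are $O((|t|+|w|)^{2p-k})$ with $2p-k>0$, and they extend continuously by zero to the vertex. This gives joint $C^m$ regularity for $N\ge m+2$.

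With this repaired, the rest of your plan matches the paper's proof:
\begin{itemize}
\item \textbf{Step 1.} This is the same construction: the joint inverse of $(y,v)\mapsto(y,\exp_yv)$, $U_0=\mu^{-1/2}$ from the normal-coordinate density, transport integrals along $\gamma_s(x,y)$, and differentiation under the integral. Your recursion drops the prefactor $-kU_0(x,y)$, which is harmless for regularity.
\item \textbf{Cutoff commutators.} Your first option is exactly the paper's choice: $\chi\equiv1$ for $\varrho\le 3\widehat H/2$ and $\xi$ supported below $3\widehat H/2$. The commutators then live in $t\ge 3\widehat H/2$ and never enter $F_N$; your initial claim that they are of the form smooth times $(t^2-\varrho^2)_+^{j}$ with $j\ge N-1$ is not right, but it becomes irrelevant.
\item \textbf{Step 3.} This is the paper's argument: the energy estimate with forcing in $L^1(0,\widehat H;H^2_x)$ giving $H^3_x$ and $\partial_t$ in $H^2_x$, difference quotients in $y$ (which justify your claim that $\partial_yW$ solves the differentiated equation), and the embeddings $H^3\hookrightarrow C^1$ and $H^2\hookrightarrow C^0$.
\end{itemize}
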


\begin{proof}
The inverse function theorem applied to $(y,v)\mapsto(y,\exp_yv)$ gives a jointly smooth inverse $v=v(x,y)$.
Thus $\gamma_s(x,y)=\exp_y(sv(x,y))$ and $\varrho^2=|v(x,y)|_{g(y)}^2$ are jointly smooth, including at $x=y$.
Define the positive density $\mu$ by $(\exp_y)^*\dd V_g=\mu(y,v)\dd V_{g(y)}(v)$, where $\dd V_{g(y)}$ is the metric volume on the tangent space.
The product spacetime metric has the same normal-coordinate density, independent of $t$.
The Hadamard transport formulas~\cite[Proposition~2.3.1]{BGP2007} therefore give
\[
\begin{split}
U_0(x,y)&=\mu(y,v(x,y))^{-1/2},\\
U_j(x,y)&=-jU_0(x,y)\int_0^1 s^{j-1}
U_0(\gamma_s(x,y),y)^{-1}
(L_xU_{j-1})(\gamma_s(x,y),y)\dd s.
\end{split}
\]
Differentiation under the integral proves joint smoothness and time independence, and compactness gives the uniform bounds.

The regular residual is a smooth coefficient times $\one_{\{t\geq\varrho\}}(t^2-\varrho^2)^{N-1}$.
In the smooth coordinates $w=n_\eps(y)^{1/2}v(x,y)$, we have $\varrho=|w|$.
Writing $p=N-1$, the cone factor is $C^{p-1}$ away from the vertex.
Its interior derivatives of order $k\leq m$ are $O((|t|+|w|)^{2p-k})$ and extend continuously by zero at the vertex when $p\geq m+1$.
This proves joint $C^m$ regularity for $N\geq m+2$.
The cutoffs give $F_N$ a common compact spatial support for $y\in\overline Y$.

Since $\mathcal Q$ acts only in $(x,t)$, difference quotients in $y$ and the energy estimate~\cite[Chapter~6]{Taylor2011} yield
\[
\begin{aligned}
&\sup_{\substack{y\in\overline Y\\0\leq t\leq\widehat H}}
\sum_{|\alpha|\leq1}
\left(\|\partial_y^\alpha W(t,y)\|_{H_x^3}
+\|\partial_t\partial_y^\alpha W(t,y)\|_{H_x^2}\right)\leq C\sup_{y\in\overline Y}\sum_{|\alpha|\leq1}
\|\partial_y^\alpha F_N\|_{L^1(0,\widehat H;H_x^2)}.
\end{aligned}
\]
For $m\geq4$, the forcing depends $C^1$ on $y$ in this norm, and the right-hand side is uniformly finite.
The embeddings $H^3(\R^3)\hookrightarrow C^1$ and $H^2(\R^3)\hookrightarrow C^0$ then give joint $C^1$ regularity of $W$.
\end{proof}

\begin{proposition}\label{prop:small-medium}
Let $D\subset\R^3$ be compact, let $x_1,\ldots,x_4\notin D$ be distinct receivers, and fix a real $q_0\in C_c^\infty(\Omega)$.
Suppose that $H>\max_{i,y\in D}|x_i-y|$.
For all sufficiently small $|\eps|$, the medium $n_\eps=1+\eps q_0$ satisfies \eqref{eq:Green-hypothesis} for $0<t<H$, with smooth $\tau_i,A_i$, positive $A_i$, and tails admitting $C^1$ extensions near $D\times[0,H]$.
If the Euclidean travel-time matrix has a uniform positive smallest singular value on $D$, then \eqref{eq:travel-rank} also holds for sufficiently small $|\eps|$.
\end{proposition}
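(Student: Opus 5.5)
The plan is to pass to the conjugated operator $\mathcal Q=\partial_t^2-\Delta_g+V$ of \eqref{eq:conjugation} and build its causal fundamental solution with the source-parametrized Hadamard construction of Lemma~\ref{lem:parametric-Hadamard}. Since $v=\varpi z$ and $\varpi^{-1}n^{-1}P\varpi=\mathcal Q$, we have
\[
G(x,y,t)=\varpi(x)\varpi(y)^{-1}n(y)^{-1}\mathcal G(x,y,t),
\]
up to the volume normalization $\dd V_g=n^{3/2}\dd x$, which only changes a smooth positive factor in $y$. Here $\mathcal G$ is the causal fundamental solution of $\mathcal Q$ for $\dd V_g$.

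\textbf{Step 1: geometry for small $\eps$.} Take $\widehat H$ slightly larger than $H$. Choose a bounded neighborhood $Y$ of $D$ and a large ball $B$ containing $Y$, all $x_i$, and $\supp q_0$. As $\eps\to0$, the metric $g_\eps=n_\eps\dd x^2$ converges in $C^\infty$ to the Euclidean metric, and equals it outside $\supp q_0$. Euclidean exponential maps are global diffeomorphisms with no conjugate points. Stability of the geodesic flow, together with the fact that geodesics are straight lines outside a compact set, then shows the following for small $|\eps|$. For every $y$ near $\overline Y$, $\exp_y$ is a diffeomorphism from $\{|v|<2\widehat H\}$ onto $B_g(y,2\widehat H)$, and $d_g(\cdot,y)$ is the smooth radial function. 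This verifies the hypothesis of Lemma~\ref{lem:parametric-Hadamard}. In particular, $\tau_i=d_g(\cdot,x_i)$ is smooth near $D$, because the $x_i\notin D$ stay away from $y$, and $\nabla\tau_i\to\nabla|x_i-\cdot|$ in $C^0(D)$. The singular-value function is continuous in the matrix entries, so the rank bound \eqref{eq:travel-rank} for the Euclidean matrix persists with $\kappa/2$ for small $|\eps|$. This proves the last assertion.

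\textbf{Step 2: parametrix and representation.} In three dimensions the Hadamard expansion reads
\[
\mathcal G=\frac{U_0}{2\pi}\delta(t^2-\varrho^2)\one_{t>0}
+\one_{\{t\geq\varrho\}}\sum_{j\geq1}c_jU_j(t^2-\varrho^2)^{j-1}.
\]
Truncate it at order $N$, insert spatial and temporal cutoffs, and correct by the causal solution $W$ of $\mathcal QW=-F_N$. By the lemma, $W$ is jointly $C^1$ for $0\le t\le\widehat H$ uniformly in $y$, provided $N\geq6$. Because $d_g(x_i,y)<\widehat H$ for $y\in D$ and small $\eps$, the cutoffs equal one along the relevant set for $t<H$. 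Finite propagation from Proposition~\ref{prop:direct} ensures that the cutoff errors do not reach $(x_i,t)$ for $t<H$. Using $\delta(t^2-\tau^2)=\delta(t-\tau)/(2\tau)$ for $t>0$, we then read off
\[
A_i(y)=\frac{\varpi(x_i)U_0(x_i,y)}{4\pi\,\varpi(y)n(y)\tau_i(y)}>0,
\]
up to the harmless normalization factor. The tail is $V_i=$ (finite Hadamard sum) $+\,W$, restricted to $t\geq\tau_i$. Both pieces are $C^1$: the finite sum is smooth, and $W$ is $C^1$ in $(y,t)$ near $D\times[0,H]$, which gives the required extension. Uniform bounds on $y$ and $\eps$ follow from the uniform derivative bounds in the lemma. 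A positive lower bound $A_*$ comes from $U_0\to1$ and $\tau_i\to|x_i-y|$.

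\textbf{Main obstacle.} The main obstacle is Step 1's global statement: there are no conjugate or cut points and no additional singular arrivals up to time $2\widehat H$, uniformly in $y$. My first approach is a perturbation argument. Write $d\exp_y(v)$ as a continuous function of $(\eps,y,v)$ on a compact set, using that geodesics leave $\supp q_0$ in bounded time and thereafter are straight. Then apply a quantitative inverse function theorem. Injectivity follows because straight lines never refocus: the Euclidean map is uniformly bi-Lipschitz on the relevant compact set, and that property persists under small $C^1$ perturbation.
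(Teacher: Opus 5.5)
Your proposal is correct and follows essentially the same route as the paper. The steps match: conjugation to $\mathcal Q$; near-identity control of $D_v\exp_y$, giving injective normal balls of radius $2\widehat H$; the truncated Hadamard--Riesz parametrix with cutoffs plus the $C^1$ causal correction $W$ from Lemma~\ref{lem:parametric-Hadamard}; and continuity of $\sigma_{\min}$ for the rank bound. The only differences are that you get $\|D_v\exp_y(v)-I\|<1/2$ by compactness in $(\eps,y,v)$ rather than the paper's explicit Gronwall bound, and that you leave implicit two short steps the paper states: geodesics of length $<2\widehat H$ are minimizing (Hopf--Rinow plus injectivity), and $W(x_i,y,t)=0$ for $t<\tau_i(y)$ (finite propagation), which is what lets $W$ sit in the Heaviside tail.
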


\subsection{Proof of Proposition~\ref{prop:small-medium}}

\begin{proof}
Choose $\widehat H>H$ and a bounded open neighborhood $Y$ of $D$ such that $\overline Y$ is disjoint from the receivers and $\max_{i,y\in\overline Y}|x_i-y|<H$.
Write $g_\eps=n_\eps\dd x^2$.
For sufficiently small $|\eps|$, we have $1/2\leq n_\eps\leq3/2$, and comparison with Euclidean line segments gives $\max_{i,y\in\overline Y}d_{g_\eps}(x_i,y)<H$. The larger time interval will provide the required extensions near $t=H$.

\medskip\noindent\textit{Step 1: source-centered normal balls.}
The geodesic equation and its linearization give
\begin{equation}
\sup_{\substack{y\in\R^3\\|v|\leq4\widehat H}}
\|D_v\exp_y^{g_\eps}(v)-I\|
\leq C_{\widehat H}|\eps|,
\label{eq:small-medium-exp}
\end{equation}
where the constant depends on $\widehat H$ and finitely many bounds on $q_0$.
Indeed, writing the geodesic equation on $0\leq r\leq1$ as $\gamma''=-\mathsf C(\gamma)(\gamma',\gamma')$, we have $\|\mathsf C\|_{C^1}\leq C|\eps|$ and $|\gamma'|\leq\sqrt3|v|$ by energy conservation.
For a variation $J=D_v\gamma\,e$,
\[
J''=-D\mathsf C(\gamma)(J)(\gamma',\gamma')
-2\mathsf C(\gamma)(J',\gamma'),
\qquad J(0)=0,\quad J'(0)=e.
\]
Its integral equation and Gronwall's inequality give \eqref{eq:small-medium-exp}.

Choose $|\eps|$ so that $C_{\widehat H}|\eps|<1/2$.
Integration along a line segment in the initial velocity gives
\[
|\exp_y(v)-\exp_y(w)-(v-w)|\leq\tfrac12|v-w|,
\qquad |v|,|w|\leq4\widehat H.
\]
Hence $\exp_y$ is both injective and nonsingular on this ball.
Uniform equivalence to the Euclidean metric makes $g_\eps$ complete, so minimizing geodesics exist by the Hopf--Rinow theorem~\cite[Chapter~6]{Lee2018}.
Every geodesic of length less than $2\widehat H$, parametrized on $[0,1]$, has initial Euclidean speed less than $2\sqrt2\widehat H<4\widehat H$.
Such a geodesic must minimize, since a shorter minimizing geodesic to the same endpoint would contradict injectivity.
Consequently, $\exp_y$ maps $\{|v|_{g_\eps(y)}<2\widehat H\}$ diffeomorphically onto $B_{g_\eps}(y,2\widehat H)$, as required in Lemma~\ref{lem:parametric-Hadamard}.

\medskip\noindent\textit{Step 2: a finite parametrix.}
By \eqref{eq:conjugation}, setting $\varpi=n_\eps^{1/4}$ reduces $P$ to $\mathcal Q=\partial_t^2-\Delta_{g_\eps}+V$.
Let $K_{\mathcal Q}$ be the causal kernel of $\mathcal Q$, normalized with respect to $\dd V_{g_\eps}$.
On the normal domain above, put $\varrho=d_{g_\eps}(x,y)$.
The finite Hadamard--Riesz construction~\cite[Sections~1.4 and 2.1--2.3]{BGP2007} gives
\begin{equation}
\begin{split}
K_N(x,y,t)
={}&\frac{U_0(x,y)}{4\pi\varrho}\delta(t-\varrho)+\one_{\{t\geq\varrho\}}
\sum_{\ell=1}^N c_\ell U_\ell(x,y)(t^2-\varrho^2)^{\ell-1},
\end{split}
\label{eq:finite-parametrix}
\end{equation}
where $c_\ell$ are the constants in the Riesz normalization, and the expression at the vertex is understood distributionally.
The residual has the form
\[
\mathcal R_N:=\mathcal Q K_N-\delta(t)\delta_y^{g_\eps}
=b_N(x,y)\one_{\{t\geq\varrho\}}(t^2-\varrho^2)^{N-1},
\]
with smooth $b_N$.
Fix $m\geq4$ and $N\geq m+2$ so that Lemma~\ref{lem:parametric-Hadamard} applies.

\medskip\noindent\textit{Step 3: causal correction and normalization.}
Choose a fixed smooth cutoff $\chi(\varrho^2)$ equal to one for $\varrho\leq3\widehat H/2$ and supported in $\varrho<7\widehat H/4$.
Since $K_N$ is supported in $t\geq\varrho$, the commutator error from this cutoff is supported in $t\geq3\widehat H/2$.
Choose $\xi\in C_c^\infty(\R)$ equal to one on $[0,\widehat H]$ and supported below $3\widehat H/2$, and set $F_N=\chi(\varrho^2)\xi(t)\mathcal R_N$. Extending by zero outside the normal domain gives a jointly $C^m$ forcing with common compact support in $(x,t)$.
For $t<\widehat H$, we therefore have $\mathcal Q(\chi K_N)=\delta(t)\delta_y^{g_\eps}+F_N$.

Let $W$ solve $\mathcal Q W=-F_N$ causally.
Lemma~\ref{lem:parametric-Hadamard} gives its joint $C^1$ regularity, uniformly for $y\in\overline Y$.
The support of $F_N$, finite propagation, and the triangle inequality imply 
\[
\supp W(\cdot,y,\cdot)
\subset\{(x,t):t\geq d_{g_\eps}(x,y)\}.
\]
 Causal uniqueness now yields $K_{\mathcal Q}=\chi K_N+W$ for $t<\widehat H$.

Since $\dd V_{g_\eps}=n_\eps^{3/2}\dd x$, the Green function normalized with respect to the Euclidean source measure is
\begin{equation}
G_P(x,y,t)=\varpi(x)\varpi(y)K_{\mathcal Q}(x,y,t).
\label{eq:kernel-normalization}
\end{equation}
At each receiver, $\tau_i(y):=d_{g_\eps}(x_i,y)<H$, so the spatial cutoff equals one and $A_i(y)=\frac{\varpi(x_i)\varpi(y)U_0(x_i,y)}{4\pi\tau_i(y)}>0$. Because $W(x_i,y,t)=0$ for $t<\tau_i(y)$, it can be included in the Heaviside tail in \eqref{eq:Green-hypothesis}.
The polynomial tail in \eqref{eq:finite-parametrix} is smooth near the separated receivers, while $W$ is jointly $C^1$.
Together with $\widehat H>H$, these facts give the claimed $C^1$ extensions of $V_i$ near $D\times[0,H]$.
Compactness gives a positive lower bound on $A_i$ and the required norm bounds.

Finally, smooth dependence of geodesics on $\eps$ and the inverse function theorem give $\tau_{i,\eps}\to|x_i-\cdot|$ in $C^2(D)$.
Continuity of the smallest singular value then preserves the uniform positive rank bound for sufficiently small $|\eps|$.
\end{proof}

\section{Discrete reconstruction error}\label{app:marching}

\subsection{Interior sampling and interpolation}\label{app:interior-sampling}

For each receiver, retain all samples strictly inside $I_i$ when its endpoints are known exactly.
When the endpoints are computed from approximate travel times, retain all samples inside
\begin{equation}
\widehat I_i^{\,\rm safe}
=\big(\widehat\tau_i(a_-)+\epsilon_K,\,
T_0+\widehat\tau_i(a_+)-\epsilon_K\big)\subset I_i.
\label{eq:safe-crop}
\end{equation}
The inclusion follows from \eqref{eq:kernel-error}.
Require at least two retained samples for each receiver.
The bound $\beta_i\geq\theta$ gives $|I_i|\geq\theta T_0$ and $|\widehat I_i^{\,\rm safe}|\geq\theta T_0-4\epsilon_K$.
Thus $4\epsilon_K+2\Delta<\theta T_0$ guarantees at least two retained samples.
Let $J_i$ be their piecewise linear interpolant with constant extensions, and let $W_i$ be the true waveform extended constantly using its interior endpoint limits.

\begin{lemma}\label{lem:interpolation}
Under these sampling and cropping rules,
\begin{equation}
\|J_i-W_i\|_{L^\infty(\R)}
\leq\eta:=\delta+L_w(\Delta+2\epsilon_K).
\label{eq:interpolation-error}
\end{equation}
The $2\epsilon_K$ term is omitted when the arrival interval endpoints are known exactly.
\end{lemma}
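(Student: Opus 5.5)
We split $\R$ into the hull of the retained samples and its complement, and on each piece bound $J_i-W_i$ by three contributions: the noise $\delta$, the interpolation error, and a boundary gap of size $\Delta+2\epsilon_K$. Lemma~\ref{lem:waveform-Lip} supplies the tool throughout: $w_i$ is $L_w$-Lipschitz on $I_i$ and extends continuously to $\overline I_i$. Hence $W_i$, which equals $w_i$ on $I_i$ and is extended constantly by its endpoint limits, is $L_w$-Lipschitz on all of $\R$.

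Let $t_{i,1}<\dots<t_{i,m}$, with $m\geq2$, be the retained samples. All of them lie in $I_i$: in the exact case by construction, and in the approximate case by the inclusion \eqref{eq:safe-crop}. Consequently $W_i(t_{ik})=w_i(t_{ik})$ and $|J_i(t_{ik})-W_i(t_{ik})|=|\xi_{ik}|\leq\delta$.

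\emph{Interior bound.} For $t\in[t_{i,k},t_{i,k+1}]$, write $J_i(t)$ as the convex combination of the endpoint values. Then
\[
|J_i(t)-W_i(t)|\leq\delta+\max_{k'\in\{k,k+1\}}|W_i(t_{ik'})-W_i(t)|\leq\delta+L_w\Delta,
\]
because consecutive retained samples are also consecutive grid points, so their spacing is at most $\Delta$.

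\emph{Exterior bound.} This is the step that needs care. For $t\leq t_{i,1}$ we have $J_i(t)=J_i(t_{i,1})$, so
\[
|J_i(t)-W_i(t)|\leq\delta+|W_i(t_{i,1})-W_i(t)|.
\]
Because $W_i$ is constant below $\tau_i(a_-)$, the last term is at most $L_w\,(t_{i,1}-\tau_i(a_-))_+$. We therefore need a bound on $t_{i,1}-\tau_i(a_-)$, using that the grid covers the arrival interval with spacing at most $\Delta$.
\begin{itemize}
\item In the exact case, the first grid point inside $I_i$ lies within $\Delta$ of the endpoint $\tau_i(a_-)$, so the term is at most $L_w\Delta$.
\item In the approximate case, \eqref{eq:kernel-error} gives $|\widehat\tau_i(a_-)-\tau_i(a_-)|\leq\epsilon_K$. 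The left end of $\widehat I_i^{\,\rm safe}$ is therefore within $2\epsilon_K$ of $\tau_i(a_-)$, and the first retained sample lies within $\Delta$ of that left end. This gives $t_{i,1}-\tau_i(a_-)\leq\Delta+2\epsilon_K$.
\end{itemize}
The right endpoint is symmetric, using $T_0+\tau_i(a_+)$ and the right end of $\widehat I_i^{\,\rm safe}$.

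Combining the interior and exterior bounds gives $\|J_i-W_i\|_{L^\infty(\R)}\leq\delta+L_w(\Delta+2\epsilon_K)$, with the $2\epsilon_K$ term absent when the endpoints are known exactly.
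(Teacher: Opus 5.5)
Your proof is correct and follows the same route as the paper. The convex interpolation weights and the grid spacing give $\delta+L_w\Delta$ between retained samples, and the constant extensions give $\delta$ plus $L_w$ times the endpoint gap, which cropping keeps below $\Delta+2\epsilon_K$; your explicit observation that $W_i$ is $L_w$-Lipschitz on all of $\R$ only spells out a step the paper leaves implicit.
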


\begin{proof}
Cropping removes at most $2\epsilon_K$ from each end of $I_i$, and the next strictly interior grid point is at distance at most $\Delta$.
Hence the first and last retained samples are within $\Delta+2\epsilon_K$ of the corresponding endpoints of $I_i$.
Since all samples inside the chosen interval are retained, consecutive retained sample times remain separated by at most $\Delta$.
The nonnegative interpolation weights give an error of at most $\delta+L_w\Delta$ between retained samples.
Outside their range, the constant extensions give an error bounded by $\delta$ plus $L_w$ times the corresponding endpoint gap.
This proves \eqref{eq:interpolation-error}, with the cropping term omitted when the endpoints are known exactly.
\end{proof}

\subsection{Proof of Theorem~\ref{thm:march-error}}\label{app:guard-thresholds}

For the exact source, set 
\[
r_i(s)=s+\tau_i(a(s)),\qquad
p_i(s)=\frac{\lambda(s)A_i(a(s))}{\beta_i(s)},
\]
 and define $M_*(s)$ by its rows 
\[
(M_*(s))_{i,:}
=\big(p_i(s)\nabla\tau_i(a(s))^T,-A_i(a(s))\big).
\]
 The constants used in the checks are
\begin{equation}
\begin{gathered}
B=1+M_v\max_i\|\nabla\tau_i\|_{L^\infty(K)},\qquad
p_*=\lambda_*A_*/B,\\
K_M=\frac{B}{A_*\kappa}
\left(1+\frac{1+M_v}{\lambda_*}\right),\qquad
m_*=K_M^{-1}.
\end{gathered}
\label{eq:guard-constants}
\end{equation}
By the source bounds and \eqref{eq:matrix-factor}, using the Euclidean operator norm, 
\[
p_i(s)\geq p_*,\qquad
\|M_*(s)^{-1}\|\leq K_M,\qquad
\sigma_{\min}(M_*(s))\geq m_*.
\]

\begin{proof}
Use the Lipschitz representatives of $a'$ and $\lambda$ on $[0,T_0]$, and set 
\[
a_n^*=a(s_n),\qquad v_n^*=a'(s_n),\qquad
\ell_n^*=\lambda(s_n),
\]
  
\[
e_n=|a_n-a_n^*|,\qquad
d_n=|v_n-v_n^*|+|\ell_n-\ell_n^*|.
\]
 The constant $C$ depends only on the quantities listed in Theorem~\ref{thm:march-error} and may change from line to line.

\medskip\noindent\textit{Step 1: tail and waveform errors.}
Suppose that the preceding steps have passed all checks and that the current evaluation points lie in their prescribed domains.
The source bounds and the $C^1$ bounds on $V_i$ make the exact tail integrand uniformly Lipschitz in emission time, so the left-endpoint quadrature error is at most $Ch$.
Moreover,
\[
\begin{aligned}
|\widehat r_i^n-r_i(s_n)|&\leq\epsilon_K+Ce_n,\\
|J_i(\widehat r_i^n)-W_i(r_i(s_n))|
&\leq\eta+C(\epsilon_K+e_n).
\end{aligned}
\]
At the exact history points, $r_i(s_n)-s_j\geq\tau_i(a(s_j))$, so $V_i^e$ agrees with $V_i$.
Comparing the history sums using \eqref{eq:kernel-error} and the Lipschitz continuity of $V_i^e$ gives
\begin{equation}
|p^n-p_*^n|
\leq C\big(\eta+\epsilon_K+h+e_n
+h\sum_{j=0}^{n-1}(e_j+d_j)\big),
\label{eq:discrete-p-error}
\end{equation}
where $p_*^n=(p_1(s_n),\ldots,p_4(s_n))^T$.
This also holds at $n=0$ because $W_i$ uses the interior waveform limits.

\medskip\noindent\textit{Step 2: discrete error estimates.}
Set $M_*^n=M_*(s_n)$ and
\[
z_n=\binom{v_n}{\ell_n},\qquad
z_n^*=\binom{v_n^*}{\ell_n^*}.
\]
The coefficient bounds give $\|M_n-M_*^n\| \leq C\big(|p^n-p_*^n|+e_n+\epsilon_K\big)$. Once the checks before solving pass, $\|M_n^{-1}\|\leq2/m_*$, and the exact identity $M_*^nz_n^*=-p_*^n$ yields
\[
z_n-z_n^*
=M_n^{-1}\big(-(p^n-p_*^n)
-(M_n-M_*^n)z_n^*+e_n^{\rm lin}\big).
\]
Consequently,
\begin{equation}
d_n\leq C\big(q_{\rm err}+e_n
+h\sum_{j=0}^{n-1}(e_j+d_j)\big).
\label{eq:discrete-d-error}
\end{equation}
This estimate does not require the checks after the current solve.
The Euler update and Taylor's estimate give
\begin{equation}
e_{n+1}\leq e_n+hd_n+Ch^2,\qquad
e_n\leq h\sum_{j=0}^{n-1}d_j+Ch,\qquad e_0=0.
\label{eq:discrete-e-error}
\end{equation}
Substitution into \eqref{eq:discrete-d-error} and interchange of the finite sums give 
\[
d_n\leq Cq_{\rm err}+Ch\sum_{j=0}^{n-1}d_j.
\]
 The discrete Gronwall inequality bounds $d_n$ and the position errors, including the next node, by $Cq_{\rm err}$ on all accepted steps.

\medskip\noindent\textit{Step 3: verification of the checks.}
Suppose that all preceding steps have passed and consider step $n$.
Then $a_n$ and all earlier history points lie in $K$.
Writing $\tau_{\max}=\max_{i,y\in K}\tau_i(y)$, we have 
\[
\tau_{\min}-\epsilon_K
\leq\widehat r_i^n-s_j
\leq T_0+\tau_{\max}+\epsilon_K,
\qquad j<n.
\]
 Thus sufficiently small $\varepsilon_0$ keeps all current tail evaluations in $K\times[0,H]$.
The preceding error bounds now imply $|p^n-p_*^n|+\|M_n-M_*^n\|\leq Cq_{\rm err}$. Choosing $\varepsilon_0$ so that $C\varepsilon_0<\min\{p_*,m_*\}/2$ ensures both checks before solving.
Equation~\eqref{eq:discrete-d-error} then gives $d_n\leq Cq_{\rm err}$, which, together with the source bounds and \eqref{eq:kernel-error}, ensures the checks in \eqref{eq:post-guards} after a further reduction of $\varepsilon_0$.
Also, \eqref{eq:discrete-e-error} gives $e_{n+1}\leq Cq_{\rm err}<\rho$, so $a_{n+1}\in K$.
Starting from $a_0=a_-$, induction therefore proves that all steps are completed and 
\[
\max_{0\leq n\leq N}e_n
+\max_{0\leq n<N}d_n\leq Cq_{\rm err}.
\]

\medskip\noindent\textit{Step 4: function errors and arrival-time monotonicity.}
The Lipschitz continuity of $a'$ and $\lambda$, together with the piecewise definitions of $a_h$ and $\lambda_h$, gives
\[
\|a_h-a\|_{W^{1,\infty}(0,T_0)}
+\|\lambda_h-\lambda\|_{L^\infty(0,T_0)}\leq C\left(
\max_{0\leq n\leq N}e_n
+\max_{0\leq n<N}d_n+h\right)
\leq Cq_{\rm err}.
\]
This proves \eqref{eq:march-error}.

Convexity of $K$ ensures that $a_h(s)\in K$ throughout the interval.
By the chain rule, for almost every $s\in(0,T_0)$,
\[
\frac{\dd}{\dd s}\big(s+\widehat\tau_i(a_h(s))\big)
=1+\nabla\widehat\tau_i(a_h(s))\cdot a_h'(s).
\]
The approximation bound and the Lipschitz continuity of $\nabla\tau_i$ imply
\[
\left|\nabla\widehat\tau_i(a_h)\cdot a_h'
-\nabla\tau_i(a)\cdot a'\right|\leq C\left(\epsilon_K
+\|a_h-a\|_{L^\infty(0,T_0)}
+\|a_h'-a'\|_{L^\infty(0,T_0)}\right)
\leq Cq_{\rm err}.
\]
Since $\beta_i\geq\theta$, reducing $\varepsilon_0$ so that $C\varepsilon_0\leq\theta/2$ gives
\[
\frac{\dd}{\dd s}\big(s+\widehat\tau_i(a_h(s))\big)
\geq\theta-Cq_{\rm err}\geq\theta/2
\quad\text{a.e. in }(0,T_0).
\]
This proves the arrival-time monotonicity for every receiver.
\end{proof}

\section*{Acknowledgements}
 X. Xu is partially supported by the National Key Research and Development Program of China (No. 2024YFA1012303), National Natural Science Foundation of China (No. 12525112), and the Open Research Project of Innovation Center of Yangtze River Delta, Zhejiang University. Y. Zhao is supported by National Natural Science Foundation of China (No. 12371421).

\section*{Conflict of Interest Statement}

The authors declare that they have no conflict of interest.

\section*{Data Availability Statement}
The numerical experiments use synthetic data generated using the models and parameters described in this article.
The data supporting the numerical results are available from the corresponding author upon reasonable request.

\end{document}